\numberwithin{equation}{section} \hyphenation{semi-stable}
\font\tengothic=eufm10 scaled\magstep 1 \font\sevengothic=eufm7
\def\goth#1{{\fam\gothicfam #1}}
\newtheorem{theorem}{Theorem}[section]
\newtheorem{lemma}[theorem]{Lemma}
\newtheorem{proposition}[theorem]{Proposition}
\newtheorem{corollary}[theorem]{Corollary}
\newtheorem{conjecture}[theorem]{Conjecture}
\theoremstyle{definition}
\newtheorem{definition}[theorem]{Definition} 
\newtheorem{remark}[theorem]{Remark}
\newtheorem{example}[theorem]{Example}
\newcommand{\codim}{\operatorname{codim}}
\newcommand{\coker}{\operatorname{coker}}
\newcommand{\Hom}{\operatorname{Hom}}
\newcommand{\ext}{\operatorname{ext}}
\newcommand{\Ext}{\operatorname{Ext}}
\newcommand{\depth}{\operatorname{depth}}
\newcommand{\im}{\operatorname{im}}
\newcommand{\Hi}{\operatorname{Hilb}}
\DeclareMathOperator{\GradAlg}{GradAlg}
\newcommand{\smallbox}{\hbox{\tiny $\qed$}}
\newcommand{\Proj}{\operatorname{Proj}}
\newcommand{\Spec}{\operatorname{Spec}}
\newcommand{\cA}{{\mathcal A}}
\newcommand{\cB}{{\mathcal B}}
\newcommand{\cH}{{\mathcal H}}
\newcommand{\cI}{{\mathcal I}}
\newcommand{\cC}{{\mathcal C}}
\newcommand{\cF}{{\mathcal F}}
\newcommand{\cG}{{\mathcal G}}
\newcommand{\cO}{{\mathcal O}}
\newcommand{\cN}{{\mathcal N}}
\newcommand {\PP}{\mathbb{P}}
\newcommand {\ra}{\longrightarrow}
\begin{document}
\title[]{Families of low dimensional determinantal schemes.}

\author[Jan O.\ Kleppe]{Jan O.\ Kleppe} 
\address{Faculty of Engineering, Oslo University College,
         Pb. 4 St. Olavs plass, N-0130 Oslo, Norway}
\email{JanOddvar.Kleppe@iu.hio.no}

\date{\today}

\subjclass{Primary 14M12, 14C05, 13D10; Secondary 14H10, 14J10}


\begin{abstract} 
  A scheme $X\subset \PP^{n}$ of codimension $c$ is called {\it standard
    determinantal} if its homogeneous saturated ideal can be generated by the
  $t \times t$ minors of a homogeneous $t \times (t+c-1)$ matrix $(f_{ij})$.
  Given integers $a_0\le a_1\le ...\le a_{t+c-2}$ and $b_1\le ...\le b_t$, we
  denote by $W_s(\underline{b};\underline{a}) \subset {\rm Hilb}(\PP^{n})$ the
  stratum of standard determinantal schemes where $f_{ij}$ are homogeneous
  polynomials of degrees $a_j-b_i$ and ${\rm Hilb}(\PP^{n})$ is the Hilbert
  scheme (if $n-c > 0$, resp. the postulation
  Hilbert  scheme if $n-c = 0$). \\[2mm]
  Focusing mainly on zero and one dimensional determinantal schemes we
  determine the codimension of $W_s(\underline{b};\underline{a})$ in $ {\rm
    Hilb}(\PP^{n})$ and we show that $ {\rm Hilb}(\PP^{n})$ is generically
  smooth along $W_s(\underline{b};\underline{a})$ under certain conditions.
  For zero dimensional schemes (only) we find a counterexample to the
  conjectured value of $\dim W_s(\underline{b};\underline{a})$ appearing in
  \cite{KM}.
\end{abstract}


\maketitle



\section{Introduction} \label{intro} The goal of this paper is to study
maximal families of determinantal schemes. Recall that a scheme $X\subset
\PP^{n}$ of codimension $c$ is called determinantal if its homogeneous
saturated ideal can be generated by the $r \times r$ minors of a homogeneous
$p \times q$ matrix $(f_{ij})$ with $c=(p-r+1)(q-r+1)$. If $r={\rm
  \min}(p,q)$, then $X$ is called {\it standard determinantal}. $X$ is
called 
{\em good determinantal} if it is standard determinantal and a generic
complete intersection.

Let $ \Hi (\PP^{n})$ be the Hilbert scheme (resp. postulation Hilbert scheme,
i.e. the Hilbert scheme of constant Hilbert function) parameterizing closed
subschemes of $\PP^{n}$ of dimension $n-c > 0$ (resp. $n-c= 0$). Given
integers $a_1\le a_2\le ...\le a_{p}$ and $b_1\le ...\le b_q$, we denote by
$W(\underline{b};\underline{a})$ (resp. $W_s(\underline{b};\underline{a}))$
the stratum in ${\rm Hilb}(\PP^{n})$ consisting of good (resp. standard)
determinantal schemes 
where $f_{ij}$ are homogeneous polynomials of
degrees $a_j-b_i$. Then $W_s(\underline{b};\underline{a}) $ is irreducible and
$W(\underline{b};\underline{a})\ne \emptyset $ if and only if
$W_s(\underline{b};\underline{a}) \ne \emptyset$ (Corollary~\ref{WWs}).
 \vskip 5 pt

 In this paper we focus, notably for {\it zero dimensional} schemes, on the
 following problems. \vskip 2 pt (1) Determine when the closure of
 $W(\underline{b};\underline{a})$ is an irreducible component of $ {\rm
   Hilb}(\PP^{n})$. \vskip 1 pt (2) Find the codimension of
 $W(\underline{b};\underline{a})$ in $ {\rm Hilb}(\PP^{n})$ if its closure is
 not a component. \vskip 1 pt (3) Determine when $ {\rm Hilb}(\PP^{n})$ is
 generically smooth along $W(\underline{b};\underline{a})$. \vskip 5 pt This
 paper generalizes and completes several results of \cite{KMMNP} and \cite{KM}
 for schemes of dimension 0 or 1. Moreover we announced in \cite{KM}, Rem.\!
 6.3 that \cite{KMMNP}, \S 10 contains inaccurate results in the
 zero dimensional case, which we fully correct in this paper
 (Remark~\ref{correctKM}).

 By successively deleting columns of the matrix associated to a determinantal
 scheme $X$, we get a nest (``flag'') of closed subschemes $X =X_c \subset
 X_{c-1} \subset ... \subset X_2 \subset {\PP^{n} }$. We prove our results
 inductively by considering the smoothness of the Hilbert flag scheme of pairs
 and its natural projections into the Hilbert schemes. Note that, for $c=2$,
 one knows that the closure $\overline {W(\underline{b};\underline{a})}$ is a
 generically smooth irreducible component of $ {\rm Hilb}(\PP^{n})$ (i.e. $
 {\rm Hilb}(\PP^{n})$ is smooth along some non-empty {\it open subset $U$ of}
 $ {\rm Hilb}(\PP^{n})$ satisfying $ U \subset
 W(\underline{b};\underline{a})$), see Theorem~\ref{MainThm2}.

 \vskip 2 pt In this approach we need to prove that certain (kernels of) ${\rm
   Ext^1}$-groups vanish or to compute its dimensions. If $\dim X =1$ (resp.
 0), then one (resp. 2 or 3) of these ${\rm Ext^1}$-groups may be non-zero and
 its dimension (resp. the sum of its dimensions) is precisely the codimension
 of $W(\underline{b};\underline{a})$ in $ {\rm Hilb}(\PP^{n})$ under certain
 assumptions, see  Theorem~\ref{Dim0Thm},  Proposition~\ref{Dim0Prop} and
 Proposition~\ref{Dim1Prop} of Section 4. These are main results of this
 paper, together with the key Proposition~\ref{mainTechn} which through
 Proposition~\ref{varMainTechn} and Lemma~\ref{unobst} give the tools we need
 in the proofs. As a consequence, if the mentioned ${\rm Ext^1}$-groups vanish
 and $c \le 5$ or 6, we get that the closure $\overline{
   W(\underline{b};\underline{a})}$ is a generically smooth irreducible
 component of $\Hi (\PP^{n})$ and that every deformation of a general $X$ of
 $W(\underline{b};\underline{a})$ comes from deforming the defining matrix
 $(f_{ij})$ of $X$. Note that this conclusion holds if $\dim X \ge 2$ and $3
 \le c \le 4$ because the above ${\rm Ext^1}$-groups vanish by \cite{KM} and
 \cite{KMMNP}. If the codimension of $W(\underline{b};\underline{a})$ in $
 {\rm Hilb}(\PP^{n})$ is positive, there are deformations of $X$ which do not
 come from deforming the matrix $(f_{ij})$. In the proofs we 
 use results of \cite{KM} and \cite{KMMNP} (see Section 3 which also contains
 a counterexample to the Conjectures of \cite{KM} in the case $\dim X = 0$),
 as well as the Eagon-Northcott and Buchsbaum-Rim complexes
 (\cite{e-n},\cite{BR}, \cite{eise}). We give many examples, supported by
 Macaulay 2 computations \cite{Mac}, to illustrate the results.

 As an application we expect that the results for zero dimensional schemes $X =
 \Proj(A)$ of this paper, together with the main result of \cite{K04} in which
 artinian Gorenstein rings are obtained by dividing $A$ with ideals being
 isomorphic to a fixed twist of the canonical module of $A$, can be used in
 the classification of Gorenstein quotients of a polynomial ring of e.g.
 codimension 4 from the point of view of determining ${\rm PGor}(H)$, cf.
 \cite{IK}, \cite{K98}.

 Some of the results of this paper were lectured at the "4th World Conference
 on 21st Century Mathematics 2009'' in Lahore in March 2009. The author thanks
 the organizers for their hospitality. Moreover I thank prof. R.M. Mir\'o-Roig
 at Barcelona for interesting comments and our discussion on the
 Conjectures~\ref{conj1} and ~\ref{conj2} and the
 counterexample~\ref{counter}.

\vskip 4mm

{\bf Notation:}  In this paper $\PP:=\PP^n$ will be the projective $n$-space
over an algebraically closed field $k$, $R=k[x_0, x_1, \dots ,x_n]$ is a
polynomial ring and $\goth m= (x_0, \dots ,x_n)$. 

We mainly keep the notations of \cite{KM}. If $X \subset Y$ are closed
subschemes of $\PP^n$, we denote by ${\mathcal I}_{X/Y}$ (resp. ${\mathcal
  N}_{X/Y}$) the ideal (resp. normal) sheaf of $X$ in $ Y$. Note that by the
codimension, ${\rm codim}_Y X$, of $X$ in $Y$ we simply mean $\dim Y -\dim X$,
also in non arithmetically Cohen-Macaulay cases. For any closed subscheme $X$
of $\PP^n$ of codimension $c$, we denote by ${\mathcal I}_X$ its ideal sheaf,
${\mathcal N}_X$ its normal sheaf, $ I_X=H^0_{*}({\mathcal I}_X)$ its
saturated homogeneous ideal and we let $\omega_X ={\mathcal E}xt^c_{{\mathcal
    O}_{\PP^n}} ({\mathcal O}_X,{\mathcal O}_{\PP^n})(-n-1)$. When {\it we
  write} $X=\Proj(A)$ {\it we take} $A:=R/I_X$ and $K_A=\Ext^c_R (A,R)(-n-1)$.
We denote the group of morphisms between coherent
$\cO_X$-modules 
by $\Hom_{\cO_X}(\cF,\cG)$ while $\cH om_{\cO_X}(\cF,\cG)$ denotes the sheaf
of local morphisms. 
Moreover we set $\hom(\cF,\cG)=\dim_k\Hom(\cF,\cG)$ and we correspondingly use
small letters for the dimension, as a $k$-vector space, of similar groups. 

We denote the Hilbert scheme by $\Hi ^p(\PP^n)$, $p$ 
the Hilbert polynomial \cite{G}, and $(X) \in \Hi ^p(\PP^n)$ for the point
which corresponds to the subscheme $X\subset \PP^n$. We denote by
$\GradAlg(H)$, or $\Hi ^H(\PP^n)$, the representing object of the functor
which parameterizes flat families of graded quotients $A$ of $R$ of $\depth A
\ge \min(1,\dim A)$ and with Hilbert function $H$ (\cite{K98}, \cite{K04}),
and we call it ``the postulation Hilbert scheme'' (\cite{K07}, \S 1.1) even
though it may be different from the parameter space studied by Gotzmann,
Iarrobino and others (\cite{Go}, \cite{IK}) who study the ``same'' scheme with
the reduced scheme structure (ours may be non-reduced and is equivalent to the
Hilbert scheme of constant postulation considered in \cite{MP} in the curve
case. They are both special cases of the multigraded Hilbert scheme of Haiman
and Sturmfels \cite{HS}). Again we let $(A)$, or $(X)$ where $X= \Proj(A)$,
denote the point of $\GradAlg(H) $ which corresponds to $A$. Note that if
$\depth_{\goth m}A \geq 1$ and $\ _0\!\Hom_R (I_X,H^{1}_{\goth m}(A)) = 0$,
then
\begin{equation}  \label{Grad}
  \GradAlg(H) \simeq \Hi ^p(\PP^n) \ \ \ {\rm at} \ \ \ (X) \   ,
\end{equation} 
and hence we have an isomorphism $ _0\!\Hom (I_{X},A) \simeq \ H^0({\mathcal
  N}_X)$ of their tangent spaces (cf. \cite{elli} for the case $\depth_{\goth
  m}A \geq 2$, and \cite{K04}, (9) for the general case). If \eqref{Grad}
holds and $X$ 
is generically a complete intersection, then $
_0\!\Ext^1_A(I_{X}/I_{X}^2,A)$ is an obstruction space of $ \GradAlg(H)$ and
hence of $ \Hi ^p(\PP^n)$ at $(X)$ (\cite{K04}, \S 1.1). When we simply write
$\Hi (\PP^n)$, we interpret it as the Hilbert scheme (resp. postulation
Hilbert scheme) if $ n-c >0$ (resp. $ n-c =0$). By definition $X$ (resp. $A$)
is {\it unobstructed} if $\Hi ^p(\PP^n)$ (resp. $\Hi ^H(\PP^n)$) is smooth at
$(X)$. Note that we called $X$ {\it H-unobstructed} in \cite{KMMNP} if $A$ was
unobstructed.

%
We say that $X$ is {\it general} in some irreducible subset $ W \subset
\Hi(\PP^n)$ if $(X)$ belongs to a sufficiently small open subset $U$ of $W$
(so any $(X)$ in $U$ has all the openness properties that we want to require).

\section{Background}

In this section we recall some basic results on standard (resp. good)
determinantal schemes needed in the sequel, see \cite{b-v}, \cite{eise} and
\cite{BH} for more details. Let
\begin{equation}\label{gradedmorfismo} \varphi:F=\bigoplus
  _{i=1}^tR(b_i)\longrightarrow G:=\bigoplus_{j=0}^{t+c-2}R(a_j)
\end{equation}
be a graded morphism of free $R$-modules and let
$\cA=(f_{ij})_{i=1,...t}^{j=0,...,t+c-2}$, $\deg f_{ij}=a_j-b_{i}$, be a
$t\times (t+c-1)$ homogeneous matrix which represents the dual
$\varphi^*:=\Hom_R(\varphi,R)$. Let $I(\cA)=I_t(\cA)$ (or $I_t(\varphi )$) be
the ideal of $R$ generated by the maximal minors of $\cA$. In the following we
always suppose 
$$ c\ge 2,\ \ t\ge 2, \ \ b_1 \le ... \le b_t \ \ \  {\rm and} \ \ \ a_0 \le a_1\le ...
\le a_{t+c-2}.$$ Recall that a codimension $c$ subscheme $X\subset \PP^{n}$ is
standard determinantal if $I_X=I(\cA)$ for some homogeneous $t\times (t+c-1)$
matrix $\cA$ as
above. 
Moreover $X\subset \PP^{n}$ is a \emph{good determinantal} scheme if
additionally, $\cA$ contains a $(t-1)\times (t+c-1)$ submatrix (allowing a
change of basis if necessary) whose ideal of maximal minors defines a scheme
of codimension $c+1$. Note that if $X$ is standard determinantal and a generic
complete intersection in $ \PP^{n}$, then $X$ is \emph{good determinantal},
and conversely \cite{KMNP}, Thm.\! 3.4. Without loss of generality we assume
that $\cA$ is minimal; i.e., $f_{ij}=0$ for all $i,j$ with $b_{i}=a_{j}$.

Let $W(\underline{b};\underline{a})$ (resp. $W_s(\underline{b};\underline{a}))$
be the stratum in $ {\rm Hilb}(\PP^{n})$ consisting of good (resp. standard)
determinantal schemes. 
By \cite{KM}, Cor.\! 2.5 and see the end of p.\! 2877, we get

\begin{corollary} \label{WWs} $W(\underline{b};\underline{a})\ne \emptyset $
  if and only if $W_s(\underline{b};\underline{a}) \ne \emptyset $ if and only
  if $a_{i-1}-b_i> 0$ for $i=1,...,t$. Moreover, their closures in $ \Hi
  (\PP^n)$ 
  are equal and irreducible.
\end{corollary}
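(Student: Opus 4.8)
The plan is to prove the statement in three stages: the chain of biconditionals characterizing non-emptiness, and then that the two closures coincide and are irreducible. Throughout I would regard a minimal matrix $\cA$ of the prescribed degrees as a point of the affine space $\AAA$ of coefficient data for the entries $f_{ij}$ with $\deg f_{ij}=a_j-b_i$ (entries of negative degree, and by minimality those of degree $0$, being set to $0$). One implication is immediate: since a good determinantal scheme is in particular standard determinantal, $W(\underline b;\underline a)\subseteq W_s(\underline b;\underline a)$, so $W(\underline b;\underline a)\ne\emptyset$ forces $W_s(\underline b;\underline a)\ne\emptyset$.

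For the necessity of $a_{i-1}-b_i>0$ I would argue by contraposition, and this is the one step I expect to carry genuine content by hand. Suppose $a_{i-1}\le b_i$ for some $i$. By the orderings of the $a_j$ and $b_i$, every entry $f_{rj}$ with $r\in\{i,\dots,t\}$ and $j\in\{0,\dots,i-1\}$ has degree $a_j-b_r\le a_{i-1}-b_i\le 0$, hence vanishes (using minimality in the boundary case $a_j=b_r$, which also prevents any minor below from being a nonzero constant). Thus, after reordering, the bottom-left $(t-i+1)\times i$ corner of $\cA$ is identically zero; call the complementary bottom-right block, of size $(t-i+1)\times(t+c-1-i)$, by $D$. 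A Laplace expansion of a maximal minor of $\cA$ along the bottom $t-i+1$ rows shows that each such minor lies in the ideal generated by the $(t-i+1)\times(t-i+1)$ minors of $D$, so $I_t(\cA)\subseteq I_{t-i+1}(D)$. By the classical upper bound on the codimension of a determinantal ideal, $\codim I_{t-i+1}(D)\le (t+c-1-i)-(t-i+1)+1=c-1$, whence $\codim I_t(\cA)\le c-1<c$ for every admissible $\cA$. Therefore no codimension-$c$ standard determinantal scheme of these degrees exists, i.e. $W_s(\underline b;\underline a)=\emptyset$.

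The remaining content — sufficiency of the numerical condition, the converse implication, and irreducibility — is where the real obstacle sits, and here I would lean on the cited theory rather than reprove it. Assuming $a_{i-1}-b_i>0$ for all $i$, a generic $\cA\in\AAA$ has $I_t(\cA)$ of codimension exactly $c$ and defines a generic complete intersection by \cite{KM}, Cor.\! 2.5 (and the discussion on its p.\! 2877); such a scheme is good determinantal by \cite{KMNP}, Thm.\! 3.4. This simultaneously yields $W(\underline b;\underline a)\ne\emptyset$, completing both biconditionals, and shows the good locus is dense in the standard locus, since being a generic complete intersection is an open condition on $\AAA$. Finally, the assignment $\cA\mapsto (\Proj(R/I_t(\cA)))$ is a morphism from the irreducible open subset of $\AAA$ where $\codim I_t(\cA)=c$ into $\Hi(\PP^n)$ with image $W_s(\underline b;\underline a)$; the image of an irreducible scheme is irreducible, so $\overline{W_s(\underline b;\underline a)}$ is irreducible, and density of the good locus gives $\overline{W(\underline b;\underline a)}=\overline{W_s(\underline b;\underline a)}$. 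The genuine difficulty is precisely the sufficiency statement, which is the substantive input I would borrow from \cite{KM}; the necessity and the irreducibility arguments are then elementary by comparison.
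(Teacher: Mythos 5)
Your proof is correct and rests on the same foundation as the paper's, whose entire ``proof'' is a citation of \cite{KM}, Cor.\ 2.5 together with the discussion at the end of p.\ 2877 of that paper. You additionally write out elementary verifications that the paper delegates wholly to that citation --- the necessity of $a_{i-1}-b_i>0$ via the inclusion $I_t(\cA)\subseteq I_{t-i+1}(D)$ and the Eagon--Northcott codimension bound, and the irreducibility via the parametrization by the affine space of coefficients --- and these are sound, the only point worth making explicit being that the assignment $\cA\mapsto(\Proj(R/I_t(\cA)))$ is a morphism to $\Hi(\PP^n)$ because the Eagon--Northcott complex forces flatness (constant Hilbert function) over the open locus where the codimension equals $c$.
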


\vskip 2mm Let $A:=R/I_t(\cA)$ 
and $M:= \coker (\varphi^*)$. Using the generalized Koszul complexes
associated to a codimension $c$ standard determinantal scheme $X$, one knows
that the {\em Eagon-Northcott complex} yields the following minimal free
resolution
\begin{equation}\label{EN}0 \ra \wedge^{t+c-1}G^* \otimes S_{c-1}(F)\otimes
  \wedge^tF\ra \wedge^{t+c-2} G ^*\otimes S _{c-2}(F)\otimes \wedge ^tF\ra
  \ldots  \end{equation}  
$$ \ra
\wedge^{t}G^* \otimes S_{0}(F)\otimes \wedge^tF\ra R \ra A \ra 0 $$ of $A$ and
that the {\em Buchsbaum-Rim complex} yields a minimal free resolution
of $M$;
\begin{equation}\label{BR}
0 \ra \wedge^{t+c-1}G^* \otimes S_{c-2}(F)\otimes \wedge^tF\ra
\wedge^{t+c-2} G ^*\otimes S _{c-3}(F)\otimes \wedge ^tF\ra \ldots 
\end{equation}
 $$ \ra \wedge^{t+1}G^* \otimes S_{0}(F)\otimes \wedge^tF \ra G^*\ra \ F^* \ra
 M \ra 0 .$$

 See, for instance \cite{b-v}; Thm.\! 2.20 and \cite{eise}; Cor.\! A2.12 and
 Cor.\! A2.13. Note that \eqref{EN} show that any standard determinantal
 scheme is arithmetically Cohen-Macaulay (ACM). 

Let ${\cB}$ be the matrix obtained deleting the last column of ${\cA}$ and let
$B$ be the $k$-algebra given by the maximal minors of ${\cB}$. Let
$Y=\Proj(B)$. The transpose of ${\cB}$ induces a map $ \phi:F=\oplus
_{i=1}^tR(b_i)\rightarrow G':=\oplus _{j=0}^{t+c-3}R(a_j)$. Let $M_{\cB}$ be
the cokernel of $\phi^*=\Hom_R(\phi,R)$ and let $M_{\cA}=M$.
In this situation we recall that there is an exact sequence
\begin{equation}\label{Mi}
0\longrightarrow B \longrightarrow M_{\cB}(a_{t+c-2})
\longrightarrow M_{\cA}(a_{t+c-2}) \longrightarrow 0
\end{equation}
in which $B \longrightarrow M_{\cB}(a_{t+c-2})$ is a regular section given by
the last column of ${\cA}$. Moreover,
\begin{equation}\label{Di}
0\longrightarrow
M_{\cB}(a_{t+c-2})^* :=\Hom_{B}(M_{\cB}(a_{t+c-2}),B)\longrightarrow B
\longrightarrow A \longrightarrow 0
\end{equation}
is exact by \cite{KMNP} or \cite{KMMNP}, (3.1), i.e. we may put
$I_{X/Y}:=M_{\cB}(a_{t+c-2})^*$. Due to \eqref{BR}, $M$ is a maximal
Cohen-Macaulay $A$-module, and so is $I_{X/Y}$ by (\ref{Di}). By \cite{eise}
we have $K_{A}(n+1)\simeq S_{c-1}M_{\cA}(\ell_c)$, and hence $K_{B}(n+1)\simeq
S_{c-2}M_{\cB}(\ell_{c-1})$, where
\begin{equation}\label{ell}
 \ell_i :=\sum_{j=0}^{t+i-2}a_j-\sum_{k=1}^tb_k \ \ {\rm for} \ \ 2 \le i \le c.
\end{equation}
Hence by successively deleting columns from the right hand side of $\cA$, and
taking maximal minors, one gets a flag of determinantal subschemes
\begin{equation}\label{flag}
  ({\mathbf X.}) :  X = X_c \subset X_{c-1} \subset ...  \subset X_{2}
  \subset  \PP^{n}  
\end{equation}
where each $X_{i+1} \subset X_i$ (with ideal sheaf ${ \mathcal I_{X_{i+1}/X_i}}
= {\mathcal I_i}$) is of codimension 1, $X_{i}
\subset \PP^{n}$ is of codimension $i$  and where there exist
${\mathcal O}_{X_i}$-modules ${\mathcal M_i}$ fitting into short exact
sequences
\begin{equation}\label{fix-ref}
0\rightarrow {\mathcal O}_{X_i}(-a_{t+i-1})\rightarrow {\mathcal
M}_i \rightarrow {\mathcal M}_{i+1} \rightarrow 0 \ \  {\rm for} \
\ 2 \leq i \leq c-1,
\end{equation}
such that ${\mathcal I}_i (a_{t+i-1})$ is the ${\mathcal O}_{X_i}$-dual of
${\mathcal M}_i$ for $2 \leq i \leq c$, and ${\mathcal M}_{2}$ is a twist of
the canonical module of $X_2$. In this context we let $D_i:= R/I_{X_i}$,
$I_{D_i} = I_{X_i}$ and $I_i:=I_{D_{i+1}/I_{D_i}}$.

\begin{remark} \label{dep} 
  Let $\alpha$ be a positive integer. If $X$ is general in
  $W(\underline{b};\underline{a})$ and $u_{i-\min (\alpha ,t),i}=a_{i-\min
    (\alpha ,t)}-b_i\ge 0$ for $\min (\alpha ,t)\le i\le t$, then $X_{j}$, for
  all $j=2, \cdots , c$, is non-singular except for a subset of codimension at
  least $ \min \{2\alpha -1, j+2 \}$, i.e.
\begin{equation}\label{a-b} \codim_{X_j}Sing(X_j)\ge \min\{2\alpha
  -1, j+2\} . \end{equation} As observed in Rem.\! 2.7 of \cite{KM}, this
follows from the Theorem of  
\cite{chang} by
arguing as in \cite{chang}, Example 2.1. See  \cite{Sa} for a special case. In
particular, if 
$\alpha \ge 3$, we get that for each $i>0$, the closed embeddings
$X_i\subset \PP^{n}$ and $X_{i+1}\subset X_{i}$ are local
complete intersections outside some set $Z_i$ of codimension at
least $ \min(4,i+1)$ in $X_{i+1}$ ($\depth_{Z_i}\cO_{X_{i+1}}\ge
\mbox{min}(4,i+1)$), cf. next paragraph.
\end{remark}
\vskip 2mm In what follows we always let $Z \subset X$ (and similarly for
$Z_i\subset X_i$) be some closed subset such that $U:=X-Z\hookrightarrow
\PP^{n}$ (resp. $U_{i}:=X_{i}-Z_{i}\hookrightarrow \PP^{n}$) is a local
complete intersection (l.c.i.). Since the 1. Fitting ideal of $M$ is equal to
$I_{t-1}(\varphi)$, we get that $\tilde{M}$ is locally free of rank one
precisely on $X-V(I_{t-1}(\varphi))$ \cite{BH}, Lem.\! 1.4.8. Since the set of
non locally complete intersection points of $X\hookrightarrow \PP^{n}$ is
exactly $V(I_{t-1}(\varphi))$ by e.g. \cite{ulr}, Lem.\! 1.8, we get that
$U\subset X-V(I_{t-1}(\varphi))$ and that $\tilde{M}$ is locally free on $U$.
Indeed ${\mathcal M}_i$ and $\cI _{X_{i}}/\cI ^2_{X_{i}}$ are locally free on
$U_i$, as well as on $U_{i-1} \cap X_i$. Note that since $V(I_{t-1}(\cB))
\subset V(I_{t}(\cA))$, we may suppose $Z_{i-1} \subset X_i$!

Finally notice that there is a close relation between $ M(a_{t+c-2})$ and the
normal module $N_{X/Y}$ of the quotient $B \simeq R/I_Y \rightarrow A \simeq
R/I_X$. If we suppose $\depth_{I(Z)}B\ge 2$ where now $Y-Z\hookrightarrow
\PP^{n}$ is an l.c.i., we get by applying $ \Hom_{B}(I_{X/Y},-)$ to
(\ref{Di}), that
\begin{equation} \label{DiMi}
0\longrightarrow B \longrightarrow M_{\cB}(a_{t+c-2})
\longrightarrow N_{X/Y}
\end{equation}
is exact. Hence we have an injection $M_{\cA}(a_{t+c-2})\hookrightarrow
N_{X/Y}$, which in the case $\depth_{I(Z)}B\ge 3$ leads to an isomorphism
$M_{\cA}(a_{t+c-2})\simeq N_{X/Y}$. Indeed, this follows from the more general
fact (by letting $L=N=I_{X/Y}$) that if $L$ and $N$ are finitely generated
$B$-modules such that $\depth_{I(Z)}L\ge r+1$ and $\tilde{N}$ is locally free
on $U:=Y-Z$, then the natural map
\begin{equation} \label{NM}
\Ext^{i}_B(N,L)\longrightarrow
H_{*}^{i}(U,{\cH}om_{{\cO}_Y}(\tilde{N},\tilde{L}))
\end{equation}
is an isomorphism (resp. an injection) for $i<r$ (resp. $i=r$), and
$H_{*}^{i}(U,{\cH}om_{{\cO}_Y}(\tilde{N},\tilde{L}))\simeq
H^{i+1}_{I(Z)}(\Hom_B(N,L))$ for $i>0$, cf.\! \cite{SGA2}, exp.\! VI. Note
that we interpret $I(Z)$ as $\goth m$ if $Z= \emptyset$.


\section{The dimension of the determinantal locus}

In \cite{KM} we conjectured the dimension of $ W(\underline{b};\underline{a})$
in terms of the invariant {\small
\begin{equation} \label{lamda} \lambda_c:= \sum_{i,j}
    \binom{a_i-b_j+n}{n} + \sum_{i,j} \binom{b_j-a_i+n}{n} - \sum _{i,j}
    \binom{a_i-a_j+n}{n}- \sum _{i,j} \binom{b_i-b_j+n}{n} + 1. 
  \end{equation} }
Here the indices belonging to $a_j$ (resp. $b_i$) range over $0\le j \le
t+c-2$ (resp. $1\le i \le t$), $ \binom{a}{n}= 0$ if $a < n$ and we always
suppose  $ 
W(\underline{b};\underline{a}) \ne \emptyset$ in the following, cf.
Corollary~\ref{WWs}.

\begin{conjecture} \label{conj1}
  Given integers $a_0\le a_1\le ... \le a_{t+c-2}$ and $b_1\le ...\le b_t$, we
  set $\ell_i :=\sum_{j=0}^{t+i-2}a_j-\sum_{k=1}^tb_k$ and $h_{i-3}:=
  2a_{t+i-2}-\ell_i +n$, for $i=3,4,...,c$. 
  Assume $a_{i-\min ([c/2]+1,t)}\ge b_{i}$ for $\min ([c/2]+1,t)\le i \le t$.
  Then we have
 \[
 \dim W(\underline{b};\underline{a}) = \lambda_c+ K_3 + K_4+...+K_c \]
where $K_3=\binom{h_0}{n}$ and $K_4= \sum_{j=0}^{t+1} \binom{h_1+a_j}{n}-
\sum_{i=1}^{t} \binom{h_1+b_i}{n}$ and in general \[ K_{i+3}= \sum _{r+s=i
  \atop r , s \ge 0} \sum _{0\le i_1< ...< i_{r}\le t+i \atop 1\le
  j_1\le...\le j_s \le t } (-1)^{i-r} \binom{h_i+a_{i_1}+\cdots
  +a_{i_r}+b_{j_1}+\cdots +b_{j_s} }{n} \  {\rm for}  \ 0 \le i \le c-3. \]
\end{conjecture}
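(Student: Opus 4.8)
The plan is to prove the formula by induction on the codimension $c$, running along the flag \eqref{flag} and using the Hilbert flag scheme of the pairs $X=X_c\subset X_{c-1}=:Y$ together with its two natural projections into $\Hi(\PP^n)$. It helps to first record the meaning of the leading term: writing $\hom(-,-)$ for the dimension of the degree-zero part of the graded $\Hom$ between the free modules in \eqref{gradedmorfismo}, one checks directly from \eqref{lamda} that
\[
\lambda_c=\hom(F,G)-\hom(F,F)-\hom(G,G)+\hom(G,F)+1.
\]
This is exactly the dimension that the orbit--stabilizer count predicts for $W_s(\underline b;\underline a)$, realized as the image of the open locus of matrices $\cA$ modulo $\Aut(F)\times\Aut(G)$, \emph{provided} the infinitesimal stabilizer has the expected dimension $\hom(G,F)+1$ (the homomorphisms $\gamma\colon G\to F$ acting through $(\gamma\varphi,\varphi\gamma)$, together with the scalars). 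Hence the arithmetic content of the statement is that the excess stabilizer dimension equals $K_3+\dots+K_c$. For the base case $c=2$ this sum is empty and $\dim W(\underline b;\underline a)=\lambda_2$ is the classical Hilbert--Burch description of the codimension-two ACM stratum, along which $\Hi(\PP^n)$ is generically smooth (Theorem~\ref{MainThm2}).

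For the inductive step I would assume the formula in codimension $c-1$ for the datum obtained from $(\underline b;\underline a)$ by deleting $a_{t+c-2}$, and show that passing from $Y=X_{c-1}$ to $X=X_c$ adds exactly $(\lambda_c-\lambda_{c-1})+K_c$. Writing $D$ for the Hilbert flag scheme of pairs $(X_c\subset X_{c-1})$ and $\mathrm{pr}_1,\mathrm{pr}_2$ for its projections, with images open in $W_c$ and $W_{c-1}$, one has $\dim D=\dim W_{c-1}+\dim(\text{fibre of }\mathrm{pr}_2)=\dim W_c+\dim(\text{fibre of }\mathrm{pr}_1)$. The fibre of $\mathrm{pr}_2$ over a general $Y$ parameterizes the determinantal divisors $X\subset Y$, which by \eqref{Mi} are the regular sections $B\ra M_{\cB}(a_{t+c-2})$ given by the admissible last columns, taken modulo the subgroup of $\Aut(F)\times\Aut(G)$ fixing $\cB$ and only moving that column; so this fibre has dimension $\dim_k(M_{\cB})_{a_{t+c-2}}$ minus the dimension of the equivalence. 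The fibre of $\mathrm{pr}_1$—the ambiguity in recovering $Y$ from $X$ as a determinantal divisor—is where the good/generically-complete-intersection hypothesis enters (Corollary~\ref{WWs}); here one uses the smoothness of $D$ and the depth estimates of Remark~\ref{dep} to see that it contributes only as expected for a general member.

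The two dimensions are then computed combinatorially. One reads $\dim_k(M_{\cB})_{a_{t+c-2}}$ off the Buchsbaum--Rim resolution \eqref{BR} of $M_{\cB}$ (with $G$ replaced by $G'=\oplus_{j=0}^{t+c-3}R(a_j)$) as the Euler characteristic in degree $a_{t+c-2}$: the two leading terms $F^*$ and $(G')^{*}$, together with the dimension of the equivalence (computed from $\Aut(F)\times\Aut(G')$ restricted to the last column and the scalar), reproduce the main contribution $\lambda_c-\lambda_{c-1}$ via the explicit expansion of that difference, while the remaining terms $\wedge^{t+j}(G')^{*}\otimes S_{j-1}(F)\otimes\wedge^t F$ for $j\ge1$ contribute binomials whose arguments involve $(t+j)$-fold sums of the $a_i$ and $(j-1)$-fold sums of the $b_i$ beside the fixed shift $\sum_k b_k$. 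After substituting $h_{c-3}=2a_{t+c-2}-\ell_c+n$ and passing to complementary index sets, these reassemble—sign $(-1)^{j+1}$ against $(-1)^{i-r}$ with $s=j-1$—precisely into $K_c=K_{(c-3)+3}$. Matching degrees, multiplicities and signs of \eqref{BR} against the definition of $K_{i+3}$ is then essentially bookkeeping.

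The main obstacle is cohomological: the Euler characteristic above computes $\dim_k(M_{\cB})_{a_{t+c-2}}$, and $\mathrm{pr}_1$ has the expected fibre, only once the intermediate $\Ext^1$- and $H^1$-groups attached to the flag \eqref{flag} vanish. This is exactly what the hypothesis $a_{i-\min([c/2]+1,t)}\ge b_i$ is for: by Remark~\ref{dep} with $\alpha=[c/2]+1$ it forces $\codim_{X_j}\operatorname{Sing}(X_j)$ to be large enough that the comparison \eqref{NM} between the relevant $\Ext_B$-groups and local cohomology on the l.c.i.\ locus $U$ is an isomorphism in the needed range, so the potential obstructions vanish and the identification \eqref{DiMi} of $M_{\cA}(a_{t+c-2})$ with the normal module applies. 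I expect the argument to close cleanly when $\dim X\ge1$, where the depth available along the flag suppresses all intermediate $H^1$'s. The delicate case is $\dim X=0$: there one or more of these $\Ext^1$-groups can survive even under the stated hypothesis, and the stabilizer/section counts then pick up contributions not recorded by $K_3+\dots+K_c$. This is precisely the failure behind the counterexample~\ref{counter} announced in the introduction, and it is the step at which I would expect the clean equality to require the strongest hypotheses or to fail outright.
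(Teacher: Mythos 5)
You cannot complete this proof, because the statement is false as stated --- and the paper knows it. The statement is labelled a \emph{Conjecture} precisely because the paper only establishes it in restricted ranges (Theorem~\ref{main}: $2\le c\le 5$ and $n-c>0$, imported from \cite{KM}, plus the partial extensions of Remark~\ref{dim0new}), and Example~\ref{counter} refutes it outright for $n=c\ge 3$: a general $2\times(c+1)$ matrix of linear forms satisfies the hypothesis $a_{i-\min([c/2]+1,t)}\ge b_i$ (here it reads $a_0\ge b_2$, i.e.\ $1\ge 0$), yet the conjectured dimension $c(c+1)+c-2$ strictly exceeds $\dim_{(X)}\Hi^H(\PP^c)\le c(c+1)$. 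So no argument, however carefully the bookkeeping with the Buchsbaum--Rim complex is done, can close the induction in the zero-dimensional case. To your credit, your final paragraph diagnoses exactly this: you identify $\dim X=0$ as the place where surviving $\Ext^1$-groups break the expected section/stabilizer count, and you correctly tie this to the counterexample. But a proof proposal that ends by predicting its own failure on a case covered by the hypotheses is not a proof; the honest conclusion is that the statement must be disproved there, not proved.

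On the parts of your argument that do correspond to something in the paper: your inductive scheme --- delete the last column, compare $W(\underline b;\underline a)$ with $W(\underline b;\underline a')$ via the Hilbert flag scheme and its two projections, read $\dim_k M_{\cB}(a_{t+c-2})_0$ off the Buchsbaum--Rim resolution, and match the leftover terms against $K_c$ --- is essentially the paper's Proposition~\ref{main1} together with the machinery of \cite{KM}. One refinement you gloss over: the paper does not control the fibre of $\mathrm{pr}_1$ by smoothness and depth alone, but by the explicit numerical hypothesis \eqref{maineq}, namely $_0\hom_R(I_Y,I_{X/Y})\le\sum_{j=0}^{t+c-3}\binom{a_j-a_{t+c-2}+n}{n}$, which is shown a posteriori to be an equality; combined with the upper bound $\lambda_c+K_3+\cdots+K_c\ge\dim W(\underline b;\underline a)$ from \cite{KM}, Prop.~3.13, this squeezes all the inequalities into equalities. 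Your orbit--stabilizer reading of $\lambda_c$ and your identification of the fibre of $\mathrm{pr}_2$ with sections of $M_{\cB}(a_{t+c-2})$ modulo the column action are consistent with this, but without \eqref{maineq} (or its sufficient conditions in Remark~\ref{dim0new}) the inductive step does not close even in positive dimension.
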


For the special case where all the entries of $\cA $ have the same degree,
this means:

\begin{conjecture} \label{conj2} Let $W(\underline{0};\underline{d})$ be the
  locus of good determinantal schemes in $\PP^{n}$ of codimension $c$ given by
  the maximal minors of a \ $t\times (t+c-1)$ matrix with entries homogeneous
  forms of degree $d$. Then, $$\dim W(\underline{0};\underline{d}) =
  t(t+c-1)\binom{d+n}{n}- t^2-(t+c-1)^2+1.$$
\end{conjecture}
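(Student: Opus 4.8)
The plan is to realize $W(\underline{0};\underline{d})$ as the image of a group quotient of the space of defining matrices and to read off its dimension from the fiber-dimension theorem. Let $\VV$ be the affine space of $t\times(t+c-1)$ matrices whose entries are forms of degree $d$; since each entry ranges over the $\binom{d+n}{n}$-dimensional space $R_d$ of degree-$d$ forms, $\dim\VV=t(t+c-1)\binom{d+n}{n}$. First I would single out the open subset $\VV^\circ\subset\VV$ of matrices $\cA$ whose ideal of maximal minors $I_t(\cA)$ cuts out a good determinantal scheme of codimension $c$; by Corollary~\ref{WWs} this is nonempty, and sending $\cA$ to the scheme defined by $I_t(\cA)$ gives a morphism $\pi:\VV^\circ\to\Hi(\PP^n)$ whose image is $W(\underline{0};\underline{d})$.

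The computation of fiber dimension rests on a group action. Because all $b_i=0$ and all $a_j=d$, the graded free modules $F=R^t$ and $G=R(d)^{t+c-1}$ of \eqref{gradedmorfismo} have graded automorphism groups $GL_t(k)$ and $GL_{t+c-1}(k)$, so $\mathcal{G}:=GL_t\times GL_{t+c-1}$ acts on $\VV$ by $(P,Q)\cdot\cA=P\cA Q^{-1}$, that is, by row and column operations. Such operations leave the ideal of maximal minors unchanged, so $\pi$ is constant on $\mathcal{G}$-orbits. The heart of the argument is the converse: for $X$ good determinantal the defining matrix is determined up to the $\mathcal{G}$-action, so the fiber $\pi^{-1}(\pi(\cA))$ is exactly the orbit $\mathcal{G}\cdot\cA$. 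This uniqueness is already implicit in the irreducibility of $W$ (Corollary~\ref{WWs}); in the equal-degree setting I would deduce it from the essential uniqueness of the minimal resolution \eqref{EN}, in which $\cA$ is encoded in the differentials, together with a check that the residual freedom is only a constant base change on $F$ and $G$.

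It then remains to compute the dimension of a general orbit. Differentiating the action at the identity, the stabilizer of $\cA$ is the kernel of the map $(p,q)\mapsto p\cA-\cA q$ from $\mathfrak{gl}_t\oplus\mathfrak{gl}_{t+c-1}$ into $\mathrm{Mat}_{t\times(t+c-1)}(R_d)$. The scalar pairs $(\lambda I_t,\lambda I_{t+c-1})$ always lie in this kernel, and for general $\cA\in\VV^\circ$ I would show that they exhaust it, so that the generic stabilizer is one-dimensional and a general orbit has dimension $t^2+(t+c-1)^2-1$. Applying the fiber-dimension theorem to $\pi$ then yields
\begin{equation*}
\dim W(\underline{0};\underline{d})=\dim\VV^\circ-\bigl(t^2+(t+c-1)^2-1\bigr)=t(t+c-1)\binom{d+n}{n}-t^2-(t+c-1)^2+1,
\end{equation*}
which is the asserted value.

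The main obstacle is the uniqueness input of the second step: proving that a general good determinantal $X$ determines its matrix up to $\mathcal{G}$, i.e.\ that the general fiber of $\pi$ is a single orbit. For $c=2$ this is exactly the Hilbert--Burch theorem; for $c\ge 3$ one must track how $\cA$ is reconstructed from the Eagon--Northcott differentials and verify that the only ambiguity is a constant base change, and it is in the zero-dimensional regime $c=n$ that such uniqueness is most delicate (the same regime in which the paper exhibits a counterexample to the general Conjecture~\ref{conj1}). As a consistency check, the formula above is precisely the specialization $a_j=d$, $b_i=0$ of Conjecture~\ref{conj1}: there every correction term $K_{i+3}$ vanishes, since its binomial arguments all equal $n-(t+i-r)d<n$ for $d\ge 1$ and $t+i-r\ge t\ge 2$, while $\lambda_c$ collapses to $t(t+c-1)\binom{d+n}{n}-t^2-(t+c-1)^2+1$.
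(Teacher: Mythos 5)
You are offering a proof of a statement that the paper records only as a \emph{conjecture}, and which the paper itself disproves in the zero-dimensional range: Example~\ref{counter} (a general $2\times(c+1)$ matrix of linear forms, i.e.\ $t=2$, $d=1$, $n=c$) is explicitly asserted to contradict \emph{both} conjectures for every $c\ge 3$. Your own consistency check shows Conjecture~\ref{conj2} is exactly the equal-degree specialization of Conjecture~\ref{conj1}, so the counterexample you mention only in passing applies verbatim to the statement you are proving. In that example your formula gives $\dim\VV-(t^2+(t+c-1)^2-1)=2(c+1)^2-(c+1)^2-3=c(c+1)+c-2$, whereas $X$ is a set of $c+1$ general points of $\PP^{c}$ and the paper bounds $\dim W(\underline{0};\underline{1})\le\dim_{(X)}\Hi^{H}(\PP^{c})\le c(c+1)$. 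So the argument must fail, and it fails at exactly the step you flag as delicate: for $c\ge 3$ the general fiber of $\pi$ is \emph{not} a single $\mathcal{G}$-orbit. Comparing the two numbers shows that the fiber over a general point of this $W$ has dimension at least $c-2$ more than the orbit $\mathcal{G}\cdot\cA$, i.e.\ a general set of $c+1$ points in $\PP^{c}$ admits a positive-dimensional family of pairwise inequivalent determinantal presentations. The appeal to irreducibility of $W$ (Corollary~\ref{WWs}) gives no information about fibers of $\pi$, and uniqueness of the minimal free resolution \eqref{EN} only determines its differentials up to automorphisms of the free terms $\wedge^{j}G^{*}\otimes S_{j-t}(F)\otimes\wedge^{t}F$, which for $c\ge 3$ do not all arise from automorphisms of $F$ and $G$; this is precisely why the Hilbert--Burch case $c=2$ does not extend.

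For the record, what is actually known is that the right-hand side is always an \emph{upper bound} for $\dim W(\underline{b};\underline{a})$ (\cite{KM}, Thm.~3.5), with equality for $2\le c\le 5$ and $n-c>0$ (Theorem~\ref{main}); the paper's route is not an orbit count but an induction on columns (Proposition~\ref{main1}), in which the dimension gained by adding a column is $\dim M_{\cB}(a_{t+c-2})_{0}-1-{}_0\!\hom_{R}(I_{Y},I_{X/Y})$ and the hypothesis \eqref{maineq} is exactly what controls the excess that, in your language, makes the fiber strictly larger than the orbit. Any repair of your argument must either assume $n>c$ or import a condition of that type; as stated, the claim is false.
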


In \cite{KM} we proved that the right hand side in the formula for $\dim
W(\underline{b};\underline{a})$ in the Conjectures is always an upper bound
for $\dim W(\underline{b};\underline{a})$ (\cite{KM}, Thm.\! 3.5), and
moreover, that the Conjectures hold in the range
\begin{equation} \label{2c5}
2 \le c \le 5 \  \
  {\rm and} \ \ n-c > 0 \ {\rm (\, supposing \ } char k = 0 {\rm \ if} \ c =
    5\, )\, ,
  \end{equation}
  as well as for large classes in the range $c \ge 2$ (without assuming $n >
  c$), cf. \cite{KM}, \S 4.
%
  \begin{example}  [Counterexample to the Conjectures in the
    range $n=c\ge 3$] \  \label{counter} 

    Let $\cA$ be a general $2 \times (c+1)$ matrix of linear entries. The
    vanishing of all $2 \times 2$ minors defines a reduced scheme $X$ of $c+1$
    general points in $\PP^{c}$. The conjectured dimension is $c(c+1)+c-2$
    while the dimension of the postulation Hilbert scheme, $\dim_{(X)}\Hi^H
    (\PP^{c})$ is at most $c(c+1)$. Hence
$$\dim W({0,0};{1,1,...,1}) \le c(c+1) \ .$$ This contradicts both conjectures
for every $c \ge 3$.
\end{example}
We have, however, looked at many examples in the range $a_0 > b_t$ where we
have used Macaulay 2 to compute necessary invariants (cf. \eqref{maineq}
below), without finding more counterexamples. The counterexample we have is
only for zero dimensional schemes. 

Now we recall a few statements from the proof of \eqref{2c5} 
and a variation which we will need in the next section. In the proof we used
induction on $c$ by successively deleting columns of the largest possible
degree. Hence we computed the dimension of $W(\underline{b};\underline{a})$,
$\underline{a} = a_0, a_1,..., a_{t+c-2}$ in terms of dimension of
$W(\underline{b};\underline{a'})$, where $\underline{a'} = a_0, a_1,...,
a_{t+c-3}$. As in \S 2, we let 
$X=\Proj(A)$ belong to $W(\underline{b};\underline{a})$ and we let
$Y=\Proj(B)$, $(Y) \in W(\underline{b};\underline{a'})$, be obtained by
deleting the last column of $\cA$. 
We have
%

\begin{proposition}\label{main1} \ Let $c\ge 3$, let $(X) \in
  W(\underline{b};\underline{a})$ and suppose \ $ \dim 
  W(\underline{b};\underline{a'}) \ge \lambda_{c-1}+ K_3 + K_4+...+K_{c-1} $
  and $\depth_{I(Z)}B\ge 2$ for a general $Y=\Proj(B)\in
  W(\underline{b};\underline{a'})$. If
\begin{equation}\label{maineq}
  _0\!
  \hom_R(I_{Y},I_{X/Y}) \le  \sum _{j=0}^{t+c-3} \binom{a_j-a_{t+c-2}+n}{n} \ ,
\end{equation}
then \ \ $\dim W(\underline{b};\underline{a})= \lambda_c+ K_3 +
K_4+...+K_c.$ We also get equality in \eqref{maineq}, as well as $$\dim
W(\underline{b};\underline{a}) = \dim
W(\underline{b};\underline{a'})+\dim_kM_{\cB}(a_{t+c-2})_{0}-1-\ _0\!
\hom_R(I_{Y},I_{X/Y}).$$
\end{proposition}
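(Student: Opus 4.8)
The plan is to compute $\dim W(\underline{b};\underline{a})$ by passing through the Hilbert flag scheme $D$ of pairs $X\subset Y$ in which $(Y)\in W(\underline{b};\underline{a'})$ and $(X)\in W(\underline{b};\underline{a})$ is the scheme obtained from $Y$ by restoring the deleted last column of $\cA$ — precisely one step of the flag \eqref{flag}. The two projections $p_2\colon D\to W(\underline{b};\underline{a'})$ and $p_1\colon D\to W(\underline{b};\underline{a})$ are both dominant, and since the target strata are irreducible (Corollary~\ref{WWs}) it suffices to read off the two generic fibre dimensions and to combine them with the sharp upper bound of \cite{KM}.

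First I would analyse the fibre of $p_2$. Fix a general $(Y)=\Proj(B)$, so that $\depth_{I(Z)}B\ge 2$; then the Buchsbaum--Rim resolution \eqref{BR} of $M_{\cB}$ fixes $\dim_k M_{\cB}(a_{t+c-2})_0$ as a numerical invariant of $(\underline{b};\underline{a})$. By \eqref{Mi} the scheme $X$ is cut out inside $Y$ by a regular section $B\to M_{\cB}(a_{t+c-2})$, and two sections define the same $X$ precisely when they are proportional, so $p_2^{-1}(Y)$ is a dense open subset of $\PP(M_{\cB}(a_{t+c-2})_0)$; the depth hypothesis enters through the injection $M_{\cA}(a_{t+c-2})\hookrightarrow N_{X/Y}$ of \eqref{DiMi}, which together with the degree-$0$ part of \eqref{Mi} (where $B_0=k$) identifies the relative tangent directions of $p_2$ with $M_{\cA}(a_{t+c-2})_0$, of dimension $\dim_k M_{\cB}(a_{t+c-2})_0-1$, matching the section family. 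Hence $\dim D=\dim W(\underline{b};\underline{a'})+\dim_k M_{\cB}(a_{t+c-2})_0-1$. Next I would bound the fibre of $p_1$: over a general $(X)$ the tangent space to $p_1^{-1}(X)$ at $(X,Y)$ consists of the first-order deformations $\varphi\in{}_0\!\Hom_R(I_Y,B)$ of $Y$ that preserve the inclusion $X\subset Y$, and lifting $\varphi$ shows this happens exactly when $\varphi(I_Y)\subset I_X/I_Y=I_{X/Y}$; thus the tangent space is ${}_0\!\Hom_R(I_Y,I_{X/Y})$ and $\dim p_1^{-1}(X)\le{}_0\!\hom_R(I_Y,I_{X/Y})$. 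Combining the two fibre estimates gives the lower bound
$$\dim W(\underline{b};\underline{a})\ \ge\ \dim W(\underline{b};\underline{a'})+\dim_k M_{\cB}(a_{t+c-2})_0-1-{}_0\!\hom_R(I_Y,I_{X/Y}).$$

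Now I would feed in three external inputs and run a squeeze. The hypothesis $\dim W(\underline{b};\underline{a'})\ge\lambda_{c-1}+K_3+\cdots+K_{c-1}$ together with \cite{KM}, Thm.~3.5 forces $\dim W(\underline{b};\underline{a'})=\lambda_{c-1}+K_3+\cdots+K_{c-1}$; the same theorem gives $\dim W(\underline{b};\underline{a})\le\lambda_c+K_3+\cdots+K_c$; and the purely numerical identity
$$\lambda_c+K_3+\cdots+K_c=\big(\lambda_{c-1}+K_3+\cdots+K_{c-1}\big)+\dim_k M_{\cB}(a_{t+c-2})_0-1-\sum_{j=0}^{t+c-3}\binom{a_j-a_{t+c-2}+n}{n},$$
which underlies the induction of \cite{KM} and follows from \eqref{lamda}, the formulas for $K_i$ in Conjecture~\ref{conj1}, and the Hilbert function of $M_{\cB}$ read off \eqref{BR}, organises everything. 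Substituting \eqref{maineq} into the lower bound and comparing with the upper bound pins $\dim W(\underline{b};\underline{a})$ to the common value $\lambda_c+K_3+\cdots+K_c$. Reading the resulting chain of (in)equalities backwards then forces equality in \eqref{maineq}, i.e. ${}_0\!\hom_R(I_Y,I_{X/Y})=\sum_{j=0}^{t+c-3}\binom{a_j-a_{t+c-2}+n}{n}$; feeding this equality and $\dim W(\underline{b};\underline{a'})=\lambda_{c-1}+K_3+\cdots+K_{c-1}$ back into the now-tight lower bound yields the asserted recursive formula $\dim W(\underline{b};\underline{a})=\dim W(\underline{b};\underline{a'})+\dim_k M_{\cB}(a_{t+c-2})_0-1-{}_0\!\hom_R(I_Y,I_{X/Y})$.

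The main obstacle is the fibre analysis of $p_2$: one must verify that the generic section of $M_{\cB}(a_{t+c-2})$ is regular and produces a \emph{good} determinantal $X$ carrying the flag \eqref{flag}, that the kernel $B$ in \eqref{Mi} contributes exactly the single scalar direction (the ``$-1$''), and that $\depth_{I(Z)}B\ge 2$ is genuinely sufficient for the normal-module identifications \eqref{DiMi}--\eqref{NM} that make $p_2$ smooth of the expected relative dimension; here the local freeness of $\tilde M$ on $U=X-Z$ is essential. A secondary but crucial point is that the tangent-space estimate $\dim p_1^{-1}(X)\le{}_0\!\hom_R(I_Y,I_{X/Y})$ need only be an inequality: it is the comparison with the sharp upper bound of \cite{KM}, Thm.~3.5, not a direct fibre computation, that upgrades it to the equality recorded in the recursive formula.
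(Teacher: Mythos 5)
Your proposal is correct and follows essentially the same route as the paper: the paper's proof is exactly the squeeze between the upper bound of \cite{KM} (Prop.\ 3.13/Thm.\ 3.5), the lower bound $\dim W(\underline{b};\underline{a})-\dim W(\underline{b};\underline{a'})\ge \dim_k M_{\cB}(a_{t+c-2})_0-1-{}_0\!\hom_R(I_Y,I_{X/Y})$, and the numerical identity relating $\lambda_c-\lambda_{c-1}+K_c$ to $\dim_k M_{\cB}(a_{t+c-2})_0-1-\sum_j\binom{a_j-a_{t+c-2}+n}{n}$ via \eqref{BR}. The only difference is that your flag-scheme fibre analysis of $p_1,p_2$ re-derives the lower bound, which the paper simply imports as \cite{KM}, Prop.\ 4.1.
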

\begin{proof} Indeed the proof of Thm.\! 4.5 of \cite{KM} contains the ideas
  we need, but since the assumptions of Thm.\! 4.5 are
  different, we include a proof. 
  First we remark that we have $$ \lambda_c+ K_3 + K_4+...+K_c \ge \dim
  W(\underline{b};\underline{a})$$ by \cite{KM}, Prop.\!
  3.13 which combined with the assumption on $ \dim
  W(\underline{b};\underline{a'})$ yields
$$  \lambda _c-\lambda_{c-1}-K_c \ge \dim W(\underline{b};\underline{a})- \dim
W(\underline{b};\underline{a'}).$$ Next by \cite{KM}, Prop.\! 4.1 we have
the inequality 
$$\dim W(\underline{b};\underline{a}) -
\dim W(\underline{b};\underline{a'}) \ge \dim_kM_{\cB}(a_{t+c-2})_{0}-1-\ _0\!
\hom_R(I_{Y},I_{X/Y}).$$  Since $K_c=\ _0\! \hom(\coker
\varphi,R(a_{t+c-2}))$ by definition (see \cite{KM}, Prop.\! 3.12 and (3.14))
we can use \eqref{BR} and \eqref{Mi} to get
\begin{equation}\label{Mc0}
  \dim M_{\cB}(a_{t+c-2})_0-1=  \dim M_{\cA}(a_{t+c-2})_0=
\end{equation}
$$ \dim F^*(a_{t+c-2})_0-\dim
G^*(a_{t+c-2})_0+\ _0\! \hom(\coker \varphi,R(a_{t+c-2}))=$$
$$=\sum_{i=1}^t \binom{a_{t+c-2}-b_i+n}{n}- \sum
_{j=0}^{t+c-2} \binom{a_{t+c-2}-a_j+n}{n}+K_c.$$ Now looking at \eqref{lamda}
and noticing that $\lambda _{c-1}$ is defined by the analogous expression
where $a_j$ (resp $b_i$) ranges over $0\le j \le t+c-3$ (resp. $1\le i \le
t$), it follows after a straightforward computation that {\small
\begin{equation*} 
\lambda _c-\lambda_{c-1}=\sum_{i=1}^t
\binom{a_{t+c-2}-b_i+n}{n}- \sum _{j=0}^{t+c-2}
\binom{a_{t+c-2}-a_j+n}{n}-
 \sum _{j=0}^{t+c-3} \binom{a_j-a_{t+c-2}+n}{n}.
\end{equation*} }
Combining with \eqref{maineq}, we get $$\dim M_{\cB}(a_{t+c-2})_{0} -1- \ _0\!
\hom_R(I_{Y},I_{X/Y}) \ge  \lambda _c-\lambda_{c-1}+ K_c \ .$$
Hence all inequalities of displayed formulas in this proof are
equalities and we are done.
\end{proof}

\begin{theorem} \label{main} The Conjectures (and if $c>2$, the final
  dimension formula of Proposition~\ref{main1}) hold provided $$2 \le c \le 5
  \ \ {\rm and} \ \ n-c > 0 \ {\rm (\, supposing \ } char k = 0 {\rm \ if} \ c
  = 5\, )\, .$$
\end{theorem}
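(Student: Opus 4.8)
The plan is to argue by induction on the codimension $c$, using Proposition~\ref{main1} as the inductive step. The base case $c=2$ is the classical codimension-two (Hilbert--Burch) situation: there are no $K_i$-terms and the assertion reduces to $\dim W(\underline{b};\underline{a}) = \lambda_2$, which is already known and recorded in \cite{KM}. For $c \ge 3$ I would fix a general $X = \Proj(A)$ in $W(\underline{b};\underline{a})$, and let $Y = \Proj(B) \in W(\underline{b};\underline{a'})$, $\underline{a'} = a_0,\dots,a_{t+c-3}$, be the codimension $(c-1)$ scheme obtained by deleting the last column of $\cA$. The whole problem then comes down to verifying the three hypotheses of Proposition~\ref{main1} for this pair; once they hold, that proposition delivers at once the conjectural value $\dim W(\underline{b};\underline{a}) = \lambda_c + K_3 + \dots + K_c$ and the final dimension formula, and the induction closes.

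Two of the three hypotheses are straightforward. The dimension estimate $\dim W(\underline{b};\underline{a'}) \ge \lambda_{c-1} + K_3 + \dots + K_{c-1}$ is exactly what the inductive hypothesis supplies in codimension $c-1$, which is legitimate since $n - (c-1) > 0$. The depth condition $\depth_{I(Z)}B \ge 2$ for general $Y$ follows from Remark~\ref{dep}: taking $\alpha = 2$ (the numerical requirements $u_{i-\min(\alpha,t),i} \ge 0$ being implied by the standing hypothesis $a_{i-\min([c/2]+1,t)} \ge b_i$ of the Conjectures in the range $c \le 5$), inequality \eqref{a-b} gives $\codim_{X_{c-1}} Sing(X_{c-1}) \ge \min\{3, c+1\} \ge 2$; since $B$ is Cohen--Macaulay and the non-l.c.i. locus $Z$ sits inside $Sing(Y)$, this forces $\depth_{I(Z)}B \ge 2$.

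The real work is the remaining hypothesis \eqref{maineq}, namely the upper bound ${}_0\hom_R(I_Y, I_{X/Y}) \le \sum_{j=0}^{t+c-3}\binom{a_j - a_{t+c-2}+n}{n}$. Using that $I_{X/Y} = M_{\cB}(a_{t+c-2})^*$ is a maximal Cohen--Macaulay $B$-module by \eqref{Di}, I would compute $\Hom_R(I_Y, I_{X/Y})$ from the exact sequences \eqref{Mi} and \eqref{Di} together with the Buchsbaum--Rim resolution \eqref{BR}, thereby reducing the estimate to the vanishing (or controlled dimension) of the intermediate cohomology modules $H^i_{\fm}$ of the symmetric powers of $M_{\cB}$ and $M_{\cA}$ that appear there. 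This is precisely the analysis carried out in \cite{KM} to prove \eqref{2c5}, and it is the main obstacle: it is where $n - c > 0$ is used at every stage, and where $\mathrm{char}\, k = 0$ becomes essential when $c = 5$, since the cohomology vanishing that controls $S_{c-1}M_{\cA} = S_4 M_{\cA}$ (and hence $K_A$) can genuinely fail in positive characteristic, where symmetric and divided powers diverge. Granting \eqref{maineq} in the stated range, Proposition~\ref{main1} forces all the inequalities in its proof to be equalities, which yields both the conjectural dimension and the final dimension formula, completing the induction.
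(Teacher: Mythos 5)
Your proposal is correct and follows essentially the same route as the paper: the paper's own proof of Theorem~\ref{main} is little more than a citation of \cite{KM} (Thm.\! 4.5, Cor.\! 4.7, 4.10, 4.14), \cite{elli} for $c=2$ and \cite{KMMNP} for $c=3$, together with the observation that the proofs in \cite{KM} also establish \eqref{maineq} and hence, via Proposition~\ref{main1}, the final dimension formula. You have simply made explicit the underlying induction on $c$ by column deletion that \cite{KM} uses, correctly identifying \eqref{maineq} as the one substantive hypothesis and deferring it, exactly as the paper does, to the cohomological computations of \cite{KM}.
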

Indeed this is mainly \cite{KM}, Thm.\! 4.5, Cor.\! 4.7, Cor.\! 4.10, Cor.\!
4.14 and \cite{elli} ($c=2$) and \cite{KMMNP} ($c=3$). Moreover since the
proofs of \cite{KM} also show \eqref{maineq}, we get the final dimension
formula of Proposition~\ref{main1}. Moreover we have (valid also for $n=c$ and
$ char k \ne 0$):

\begin{remark} \label{dim0new} Assume $a_{0} > b_{t}$. Then \eqref{maineq} for
  $X$ general, and Conjecture~\ref{conj1} hold provided $3 \le c \le 5$
  (resp. $c>5$) and$ \ a_{t+c-2} > a_{t-2} $ (resp. $ \ a_{t+3} > a_{t-2}$)
  by \cite{KM09}, Thm\! 3.2.  
\end{remark}

\section{the codimension of the determinantal locus}

In this section we consider the problem of when the closure of $
W(\underline{b};\underline{a})$ is an irreducible component of $\Hi
(\PP^{n})$. If it is not a component, we determine its codimension in
$\Hi(\PP^{n})$ under certain assumptions. We also examine when $\Hi(\PP^{n})$
is generically smooth along $ W(\underline{b};\underline{a})$. Moreover we have
chosen to introduce the notion ``every deformation of $X$ comes from
deforming $\cA$'' because it gives the main reason for why $\overline{
  W(\underline{b};\underline{a})}$ is not always an irreducible component of
$\Hi (\PP^{n}).$

In the case the determinantal schemes are of dimension zero
or one, then $ \overline {W(\underline{b};\underline{a})}$ is not necessarily
an irreducible component of $\Hi(\PP^{n})$, as the following example shows.
\begin{example} [ $ \overline {W(\underline{b};\underline{a})}$ not an
  irreducible component in the range $0 \le n-c \le 1$, $c\ge 3$] \

  Let $\cB$ be a general $2 \times c$ matrix of linear entries 
  and let $\cA =[\cB,v]$ where the entries of the column $v$ are general
  polynomials of the same degree $2$. The vanishing all $2 \times 2$ minors of
  $\cA$ defines a determinantal scheme $X$ of codimension $c$ in $\PP^n$.

  (i) Let $n=c$. Then $X=\Proj(A)$ is a reduced scheme of $2c+1$ points in
  $\PP^{c}$ and with $h$-vector $(\dim
  A_i)_{i=0}^{\infty}=(1,c+1,2c+1,2c+1,...)$. It follows that $\
  _vH^{1}_{\goth m}(A) \simeq H^1({\mathcal I}_X(v)) = 0$ for $v\ge 2$ and we
  get $\ _0\!\Hom_R (I_X,H^{1}_{\goth m}(A)) = 0$. By \eqref{Grad} the
  postulation Hilbert scheme 
  is isomorphic to the usual Hilbert scheme at $(X)$, whose dimension is
  $c(2c+1)$. Moreover since the dimension of $ \overline
  {W(\underline{b};\underline{a})}$ is at most the conjectured value $
  c^2+4c-2$, and since
  $$ c^2+4c-2  < c(2c+1) \ \ {\rm for \  every \ }c \ge 3,  $$
  it follows that $ \overline {W({0,0};{1,1,...,1,2})}$ is not an irreducible
  component of $\Hi^H (\PP^{c})$.

  (ii) Let $n=c+1$. Then $X$ is a smooth connected curve in $\PP^{c+1}$ of
  degree $d=2c+1$ and genus $g=c$. Since the dimension of $ \overline
  {W(\underline{b};\underline{a})}$ is at most the conjectured value, which is
  $c^2+7c+2$, and since the dimension of the Hilbert scheme is at least
  $(n+1)d+(n-3)(1-g)=c^2+8c$, it follows that $ \overline
  {W({0,0};{1,1,...,1,2})}$ is not an irreducible component of $\Hi^p
  (\PP^{c+1})$ for every $c \ge 3$.
\end{example}

In what follows we briefly say ``$T$ a local ring'' (resp. ``$T$ artinian'')
for a local $k$-algebra $(T,{\mathfrak m}_T)$ essentially of finite type over
$k=T/{\mathfrak m}_T$ (resp. such that ${\mathfrak m}_T^r=0$ for some integer
$r$). Moreover we say ``$T \rightarrow S$ is a small artinian surjection''
provided there is a morphism $(T,{\mathfrak m}_T) \rightarrow (S, {\mathfrak
  m}_S)$ of local artinian $k$-algebras whose kernel ${\mathfrak a}$ satisfies
${\mathfrak a} \cdot {\mathfrak m}_T=0$.

Let $A=R/I_t(\cA)$. If $T$ is a local ring, we denote by $\cA_T=(f_{ij,T})$ a
matrix of homogeneous polynomials belonging to the graded polynomial algebra
$R_T:=R \otimes_k T$, satisfying $f_{ij,T} \otimes_T k=f_{ij}$ and $\deg
f_{ij,T} =a_j-b_i$ (if $\deg f_{ij,T}=0$ we let $ f_{ij,T}=0$) for all $i,j$.
Note that all elements from $T$ are considered to be of degree zero.

Once having such a matrix $\cA_T$, we get an induced morphism 
\begin{equation} \label{al} \varphi_T: F_T:=\oplus _{i=1}^tR_T(b_i)\rightarrow
  G_T:=\oplus_{j=0}^{t+c-2}R_T(a_j)
\end{equation}
 and we put $M_T= \coker \varphi_T^*$.
 \begin{lemma} \label{defalpha} If $X=\Proj(A)$, $A=R/I_t(\cA)$, is a standard
   determinantal scheme, then $A_T:=R_T/I_t(\cA_T)$ and $M_T$ are (flat)
   graded deformations of $A$ and $M$ respectively for every choice of
   $\cA_T$ as above. In particular $X_T=\Proj(A_T) \subset
   \PP^n_T:=\Proj(R_T) $ is a deformation of $X \subset \PP^n$ to $T$ with
   constant Hilbert function.
\end{lemma}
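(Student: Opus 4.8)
The plan is to show that $A_T = R_T/I_t(\cA_T)$ is flat over $T$ by proving that the Eagon--Northcott complex \eqref{EN} deforms over $T$ and remains a resolution, and similarly that the Buchsbaum--Rim complex \eqref{BR} gives the flatness of $M_T = \coker \varphi_T^*$. The key observation is that both complexes are built functorially from the single morphism $\varphi$ (via exterior powers, symmetric powers, and the determinantal structure), so the analogous complexes $\cE_\bullet(\cA_T)$ and $\cB_\bullet(\cA_T)$ over $R_T$ are obtained simply by replacing $\varphi$ with $\varphi_T$ and tensoring the free modules with $T$. These are complexes of free $R_T$-modules that specialize to \eqref{EN} and \eqref{BR} upon applying $-\otimes_T k$.

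First I would reduce the flatness of $A_T$ and $M_T$ over $T$ to the exactness of these deformed complexes. The standard criterion here is the local criterion of flatness combined with acyclicity: if $\cE_\bullet(\cA_T) \to A_T \to 0$ is a complex of flat (indeed free) $R_T$-modules whose reduction mod $\fm_T$ is the exact resolution \eqref{EN}, then by a standard argument (e.g.\ the acyclicity lemma of Buchsbaum--Eisenbud, or the fact that exactness is preserved under deformation when the fibers have the correct depth) the complex over $R_T$ is exact and $A_T$ is $R_T$-flat, hence $T$-flat. Concretely, since $T$ is local artinian one can argue by induction along a filtration of $T$ by ideals with one-dimensional quotients, reducing to a small surjection $T \to S$ with kernel $\fa$ satisfying $\fa \cdot \fm_T = 0$; then $\fa \cong k$ as a $T$-module and one gets an exact sequence relating the complexes over $T$, over $S$, and over $k$, allowing a dimension-shifting/snake-lemma argument to propagate exactness and flatness upward.

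Next I would verify the Hilbert function statement. Once $A_T$ is flat over $T$ with $A_T \otimes_T k = A$, each graded piece $(A_T)_\nu$ is a flat (hence, over artinian local $T$, free) $T$-module whose rank equals $\dim_k A_\nu = H(\nu)$; this is exactly the assertion that $X_T = \Proj(A_T) \subset \PP^n_T$ is a deformation of $X$ with constant Hilbert function. Here I would use that flatness of $A_T$ is equivalent to flatness in each degree (the grading is compatible with the $T$-action, all of $T$ sitting in degree zero), so the graded Betti numbers coming from $\cE_\bullet(\cA_T)$ are the same as those of $A$, and the Hilbert function is computed degreewise from the ranks of the free modules in the resolution, which are manifestly constant.

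The main obstacle I anticipate is establishing the exactness of the deformed complexes $\cE_\bullet(\cA_T)$ and $\cB_\bullet(\cA_T)$ over $R_T$ cleanly, rather than the bookkeeping. One clean route avoids verifying acyclicity by hand: since $X$ is standard determinantal of codimension $c$, the ideal $I_t(\cA)$ has the generic (maximal) codimension, so the Eagon--Northcott and Buchsbaum--Rim complexes are exact by the general theory (as cited, \cite{eise}, Cor.~A2.12--A2.13). The same general theory applies over $R_T$ provided $I_t(\cA_T)$ still has codimension $c$ on each fiber of $\Spec R_T \to \Spec T$ --- but there is only one fiber up to the nilpotents of $T$, namely the closed fiber $X$, so this codimension condition is automatic and one gets exactness over $R_T$ for free. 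The genuinely delicate point, then, is to confirm that the deformed differentials still assemble into the \emph{same} Eagon--Northcott/Buchsbaum--Rim complexes (i.e.\ that the functorial construction commutes with base change $k \to T$ and preserves the defining determinantal relations), which is a formal but essential check; I would handle it by noting that the differentials are given by universal polynomial formulas in the entries $f_{ij,T}$, so base change is immediate.
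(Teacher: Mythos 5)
Your proposal is correct and follows essentially the same route as the paper: the proof in the text likewise observes that the Eagon--Northcott and Buchsbaum--Rim complexes are functorial in $\varphi_T$ (hence determined by $\cA_T$ over $R_T$) and then invokes the standard fact that a complex of free $R_T$-modules which becomes a free resolution after $\otimes_T k$ is itself exact with $T$-flat cokernel. Your extra detail on the local criterion of flatness and the degreewise Hilbert-function computation merely fills in what the paper leaves implicit (only note that $T$ is a general local ring, not necessarily artinian, so the filtration argument should be read as the general local criterion of flatness).
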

\begin{proof} Consulting \eqref{EN} and \eqref{BR} we see that the
  Eagon-Northcott and Buchsbaum-Rim complexes are functorial in the sense
  that, over $R_T$, all free modules and all morphisms in these
  complexes
  are induced by $ \varphi_T$, i.e. they are determined by $\cA_T$.
  Since these complexes become free resolutions of $A$ and $M$ respectively
  when we tensor with $k$ over $T$, it follows that $A_T$ and $M_T$ are flat
  over $T$ and satisfy $A_T \otimes_T k = A$ and $M_T \otimes_T k = M$.
\end{proof}
\begin{definition} \label{everydef} Let $X=\Proj(A)$, $A=R/I_t(\cA)$, be a
  standard determinantal scheme. We say ``every deformation of
  $X$ 
  comes from deforming $\cA$'' if for every local ring $T$ and every graded
  deformation $R_T \to A_T$ of $R \to A$ to $T$, then $A_T$ is of the form
  $A_T=R_T/I_t(\cA_T)$ for some $\cA_T$ as above. Note that by \eqref{Grad} we
  can in this definition replace ``graded deformations of $ R \to A$'' by
  ``deformations of $X \hookrightarrow \PP^{n}$'' provided $\dim X \ge 1$.
\end{definition}
\begin{lemma} \label{unobst} Let $X=\Proj(A)$, $A=R/I_t(\cA)$, be a standard
  determinantal scheme, $(X) \in W(\underline{b};\underline{a})$. If every
  deformation of $X$ comes from deforming $\cA$, then $A$ (and hence $X$ if
  $\dim X \ge 1)$ is unobstructed. Moreover $\overline{
    W(\underline{b};\underline{a})}$ is an irreducible component of $\Hi
  (\PP^{n}).$
\end{lemma}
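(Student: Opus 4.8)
The plan is to prove Lemma~\ref{unobst} by translating the hypothesis into a statement about the deformation functors and then invoking the definition of unobstructedness together with the irreducibility already recorded in Corollary~\ref{WWs}. The key observation is that ``every deformation of $X$ comes from deforming $\cA$'' means precisely that the parameter space of matrices $\cA_T$ surjects onto the local deformation functor of $A$ (equivalently of $X$). So I would begin by setting up the deformation functor $\mathrm{Def}_A$ (resp.\ the functor represented by $\GradAlg(H)$, which by \eqref{Grad} agrees with the Hilbert functor at $(X)$ when $\dim X\ge 1$) and the functor $\mathrm{Mat}$ parameterizing the lifts $\cA_T$ of the matrix $\cA$ over Artinian bases $T$. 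Lemma~\ref{defalpha} furnishes a natural transformation $\mathrm{Mat}\to\mathrm{Def}_A$, sending $\cA_T$ to $A_T=R_T/I_t(\cA_T)$, and the hypothesis says this map is \emph{surjective} (formally smooth onto its image in the strong sense that every deformation is hit).

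The second step is the smoothness argument. The functor $\mathrm{Mat}$ is manifestly smooth (unobstructed): a lift $\cA_S$ of $\cA$ over $S$ always extends to a lift $\cA_T$ over any small Artinian surjection $T\twoheadrightarrow S$, since one simply lifts each polynomial entry $f_{ij,S}$ to an entry $f_{ij,T}\in R_T$ of the same degree arbitrarily (the degree constraint $\deg f_{ij,T}=a_j-b_i$ is an affine-linear, hence liftable, condition, and there is no obstruction because choosing the coefficients is unconstrained). I would make this precise by checking the lifting criterion against the definition of ``small artinian surjection'' given just before the statement: given $T\to S$ with kernel $\fa$ satisfying $\fa\cdot\fm_T=0$, any $\cA_S$ lifts because the space of entries of prescribed degree is a free module over the base. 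Now a standard principle in deformation theory says: if $\mathrm{Mat}$ is smooth and $\mathrm{Mat}\to\mathrm{Def}_A$ is surjective on all Artinian bases, then $\mathrm{Def}_A$ is itself smooth, i.e.\ $A$ is unobstructed. Concretely, to lift a deformation $A_S$ over a small surjection $T\to S$, one pulls it back (by surjectivity) to some $\cA_S\in\mathrm{Mat}(S)$, lifts $\cA_S$ to $\cA_T\in\mathrm{Mat}(T)$ by smoothness of $\mathrm{Mat}$, and then $A_T=R_T/I_t(\cA_T)$ is the desired lift of $A_S$, again by the surjectivity/naturality of Lemma~\ref{defalpha}. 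This shows $\GradAlg(H)$ (resp.\ $\Hi(\PP^n)$) is smooth at $(X)$, which is the definition of $A$ (resp.\ $X$) being unobstructed.

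The third and final step upgrades smoothness at the single point $(X)$ to the conclusion that $\overline{W(\underline b;\underline a)}$ is an irreducible component. Here I would argue dimensionally: smoothness at $(X)$ means $\dim_{(X)}\Hi(\PP^n)$ equals the tangent-space dimension, and since every first-order deformation of $X$ arises from deforming $\cA$ (the first-order part of the hypothesis), the tangent space to $\Hi(\PP^n)$ at $(X)$ is exactly the tangent space to $W(\underline b;\underline a)$ along the family of matrices. Because $W(\underline b;\underline a)$ is irreducible with closure $\overline{W(\underline b;\underline a)}$ (Corollary~\ref{WWs}) and $X$ is general in it, the local dimension of $\Hi(\PP^n)$ at $(X)$ coincides with $\dim W(\underline b;\underline a)$. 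A smooth (hence locally irreducible) point of $\Hi(\PP^n)$ lying on the irreducible subset $W(\underline b;\underline a)$ whose dimension equals the local dimension of the ambient scheme forces $\overline{W(\underline b;\underline a)}$ to be the unique component through $(X)$, hence an irreducible component.

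The main obstacle I anticipate is the precise formulation and verification of the surjectivity/smoothness transfer in the second step, namely that smoothness of $\mathrm{Def}_A$ really does follow from smoothness of $\mathrm{Mat}$ plus surjectivity of $\mathrm{Mat}\to\mathrm{Def}_A$. The subtle point is that ``every deformation comes from deforming $\cA$'' in Definition~\ref{everydef} is stated as surjectivity of objects over \emph{arbitrary} local rings $T$, and I must make sure this hypothesis is strong enough to apply at each small Artinian surjection in the inductive lifting argument — in particular that the matrix $\cA_S$ realizing a given $A_S$ can always be chosen functorially enough that its lift $\cA_T$ produces a deformation restricting correctly to $A_S$. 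The naturality built into Lemma~\ref{defalpha} should suffice, but care is needed to confirm that the two stated clauses of the conclusion (``$A$ is unobstructed'' and ``$\overline{W(\underline b;\underline a)}$ is a component'') both follow without circularity, and that the parenthetical ``hence $X$ if $\dim X\ge 1$'' correctly uses the isomorphism \eqref{Grad} between $\GradAlg(H)$ and $\Hi^p(\PP^n)$ at $(X)$.
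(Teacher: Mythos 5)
Your first step (unobstructedness) is essentially the paper's own argument: given a small artinian surjection $T\to S$ and a deformation $A_S$, write $A_S=R_S/I_t(\cA_S)$ by hypothesis, lift the entries $f_{ij,S}$ arbitrarily to $T$ (no obstruction, since the entries live in a free module), and invoke Lemma~\ref{defalpha} to see that $A_T:=R_T/I_t(\cA_T)$ is a $T$-flat lift of $A_S$; the passage to $X$ via \eqref{Grad} is also as in the paper. That part is fine.

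The component statement is where you diverge from the paper, and your dimension count has a genuine gap. From smoothness at $(X)$ and surjectivity of the first-order map you only get $\dim_{(X)}\Hi(\PP^n)=\dim T_{(X)}\Hi(\PP^n)=\dim\operatorname{im}(d f_{\cA})$, where $f$ is the morphism from the affine space $V$ of matrices to $\Hi(\PP^n)$. To conclude that $\overline{W(\underline b;\underline a)}$ is a component you still need $\dim\operatorname{im}(d f_{\cA})\le\dim W(\underline b;\underline a)$, and this is exactly the step you assert without proof: the rank of $d f$ at the \emph{particular} (not assumed general!) point $\cA$ is not a priori bounded by $\dim W$, since $(X)$ need not be a smooth point of $\overline{W}$. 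One can close this gap (e.g.\ the tangent space of the scheme-theoretic fibre $f^{-1}((X))$ lies in $\ker df_{\cA}$ and has dimension at least the generic fibre dimension $\dim V-\dim W_s$ by Chevalley semicontinuity, whence $\operatorname{rank}df_{\cA}\le\dim W_s$; or, in characteristic $0$, semicontinuity of rank plus generic smoothness of $V\to\overline{W_s}$), but neither argument is in your write-up, and your appeal to ``$X$ is general in $W$'' is not a hypothesis of the lemma. The paper avoids all of this by exploiting that the hypothesis is stated over \emph{arbitrary} local rings, not just artinian ones: it takes $T=\cO_{\Hi(\PP^n),(X)}$, applies the hypothesis to the pullback of the universal family to get $A_T=R_T/I_t(\cA_T)$, spreads the entries out to an open neighborhood $\Spec(D)$ of $(X)$ over which the Eagon--Northcott complex stays exact, and concludes $\Spec(D)\subset W_s(\underline b;\underline a)$; an irreducible closed set containing an open neighborhood of one of its points is a component. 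Your proposal never uses the non-artinian part of Definition~\ref{everydef}, which is precisely what forces you into the unfinished tangent-space estimate.
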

\begin{proof} Let $T \rightarrow S$ be a small artinian surjection and let
  $A_S$ be a deformation of $A$ to $S$. By assumption, $A_S=R_S/I_t(\cA_S)$
  for some matrix $\cA_S$. Since $T \rightarrow S$ is surjective, we can lift
  each $f_{ij,S}$ to a polynomial $f_{ij,T}$ with coefficients in $T$ such
  that $f_{ij,T} \otimes_T S=f_{ij,S}$. By Lemma~\ref{defalpha} it follows
  that $A_T:=R_T/I_t(\cA_T)$ is flat over $T$. Since $A_T \otimes_T S = A_S$
  we get the unobstructedness of $A$, as well as the unobstructedness of $X$
  in the case $\dim X \ge 1$ by \eqref{Grad}.

  Finally let $T$ be the local ring of $\Hi (\PP^{n})$ at $(X)$ and let $A_T$,
  or $\Proj(A_T)$ if $\dim X \ge 1$, be the pullback of the universal object
  of $\Hi (\PP^{n})$ to $\Spec(T)$. Then there is a matrix $\cA_T =(f_{ij,T})$
  such that $A_T=R_T/I_t(\cA_T)$ by assumption. We can extend $f_{ij,T}$ to
  polynomials $f_{ij,D}$ with coefficients in $D$ where $\Spec(D) \subset \Hi
  (\PP^{n})$ is an open neighborhood of 
  $(X)$ for which the Eagon-Northcott complex associated to the matrix
  $\cA_D=(f_{ij,D})$ is exact at any $(X') \in \Spec(D)$ (cf. \cite{PS},
  Lem.\! 6.3 or \cite{elli}, proof of Thm.\! 1; in our case the existence of
  $\Spec(D)$ is quite easy since the Eagon-Northcott complex of the
  homogeneous coordinate ring of a standard
  determinantal scheme is always exact). It follows that $\Spec(D) \subset
  W(\underline{b};\underline{a})$, and since $\Spec(D)$ is open in $\Hi
  (\PP^{n})$ we are done.
\end{proof}
\begin{remark} The arguments of these lemmas, which rely on the fact that we
  get $T$-flat schemes by just parameterizing the polynomials of 
  $\cA$ over a local ring $T$, seem to be known, see e.g. Laksov's papers
  \cite{dan2}, \cite{dan} where he looks to flat families of determinantal
  schemes and their singular loci for arbitrary determinantal schemes. Indeed
  we may expect from Laksov's papers (or prove by other arguments, 
  as we remember Laksov did in a talk at the university of Oslo in the 70's)
  that families of {\it arbitrary} determinantal schemes obtained by
  parameterizing polynomials as above are $T$-flat; thus he mainly shows the
  unobstructedness part of Lemma~\ref{unobst}. Since we in this paper only
  look at determinantal schemes defined through {\it maximal} minors, our
  proofs are rather easy. Surprisingly enough the corresponding
  unobstructedness result of the {\it $R$-module} $M$ (of maximal grade
  \cite{eis}) seems less known. Indeed one may prove the unobstructedness of
  $M$ as we did
  for 
  $A$ in Lemma~\ref{unobst} because we from the Buchsbaum-Rim complex may see
  that every deformation of $M$ to $T$ comes from deforming $\cA$.
  We have learned, by distributing a preliminary version of this paper to
  specialists in deformations of modules, that the unobstructedness of $M$ was
  proved in Runar Ile's PhD thesis, cf. \cite{I01}, ch.\! 6 (Lem.\! 6.1.2 or
  Cor.\! 6.1.4).
\end{remark}
The following result is a key proposition to our work in this section. Here
the morphisms of the $\Ext^1$-groups are induced by the inclusion $I_{X/Y}
\hookrightarrow B$, e.g.
\begin{equation*} 
 \tau_{X/Y} \ : \ _0\!
  \Ext^1_{B}(I_{Y}/I^2_{Y},I_{X/Y}) \rightarrow \ _0\! \Ext
  ^1_{B}(I_{Y}/I^2_{Y},B).
\end{equation*}
\begin{proposition} \label{mainTechn} Let $X=\Proj(A) \subset Y=\Proj(B)$ be
  good determinantal schemes defined by the vanishing of the maximal minors of
  ${\cA}$ and ${\cB}$ respectively where ${\cB}$ is
  obtained by deleting the last column of ${\cA}$. Let $Z \subset Y$ be a
  closed subset such that $U := Y-Z \hookrightarrow \PP^n$ is a local complete
  intersection (l.c.i.) and suppose \vskip 2mm

  {\rm (1)} $\depth_{I(Z)}B \geq 3 $, \ \ {\rm {\underline or}} 

  ${}$ \ \ \ \ $\depth_{I(Z)}B \geq 2 $ \ and \ $\rho^1: \
  _0\!\Ext^1_B(I_{X/Y},I_{X/Y})\to \ _0\!\Ext^1_B(I_{X/Y},B)$ is injective,

  {\rm (2)} $ \tau_{X/Y}: \ _0\! \Ext^1_{B}(I_{Y}/I^2_{Y},I_{X/Y})
  \rightarrow \ _0\! \Ext ^1_{B}(I_{Y}/I^2_{Y},B)$ is injective, and

  {\rm (3)} every deformation of \ $Y$ comes from deforming ${\cB}$. \\[2mm]
  Then every deformation of $X$ comes from deforming ${\cA}$.
  Moreover $$\dim_{(X)}\Hi (\PP^{n})=\dim_{(Y)}\Hi (\PP^{n})+\dim
  M_{\cB}(a_{t+c-2})_0-1-\ _0\! \hom_R(I_{Y},I_{X/Y}).$$
\end{proposition}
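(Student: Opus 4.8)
The plan is to study the pair $X \subset Y$ through the Hilbert flag scheme $\cD$ parametrizing inclusions $X' \subset Y'$ of good determinantal schemes of the types of $\cA$ and $\cB$, together with its two natural projections $p : \cD \to \Hi(\PP^n)$, $(X' \subset Y') \mapsto (X')$, and $q : \cD \to \Hi(\PP^n)$, $(X' \subset Y') \mapsto (Y')$. The inclusion $X \subset Y$ is cut out in codimension one by the regular section of \eqref{Mi} attached to the last column of $\cA$, so deforming the pair amounts to deforming $\cB$ together with this section. I would split the proposition into the deformation statement and the dimension formula, and deduce both from the behaviour of $p$ and $q$ at the point $(X \subset Y)$.

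For the deformation statement I would reduce (as in Lemma~\ref{unobst}) to a small artinian surjection $T \to S$, assuming inductively that $A_S := A_T \otimes_T S$ comes from a matrix $\cA_S = [\cB_S, v_S]$, so that $X_S \subset Y_S := \Proj(R_S/I_t(\cB_S))$. Given a deformation $A_T$ of $X$ restricting to $A_S$, the crux is to lift the inclusion, i.e. to find $Y_T \supset X_T$ restricting to $Y_S$. By hypothesis (3) and Lemma~\ref{unobst}, $Y$ is unobstructed, so the obstruction to lifting $Y$ alone (a class in $\ _0\!\Ext^1_{B}(I_Y/I_Y^2, B)$) vanishes. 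The obstruction to lifting the pair compatibly with the prescribed $X_T$ lives in $\ _0\!\Ext^1_{B}(I_Y/I_Y^2, I_{X/Y})$ and is carried by $\tau_{X/Y}$ to the $Y$-obstruction; since the latter vanishes and $\tau_{X/Y}$ is injective by (2), the pair-obstruction vanishes and $Y_T \supset X_T$ exists. By (3) we may write $Y_T = \Proj(R_T/I_t(\cB_T))$, and by (1) --- through the identification $M_{\cA}(a_{t+c-2}) \cong N_{X/Y}$ of \eqref{DiMi} when $\depth_{I(Z)}B \ge 3$, or its degree-zero analogue secured by injectivity of $\rho^1$ --- the codimension-one inclusion $X_T \subset Y_T$ is given by a regular section, i.e. a last column $v_T$ lifting $v_S$. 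Then $\cA_T := [\cB_T, v_T]$ satisfies $R_T/I_t(\cA_T) = A_T$, and the usual reduction to complete local rings gives Definition~\ref{everydef} for all local $T$.

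For the dimension formula, the deformation statement with Lemma~\ref{unobst} makes $X$ (and, by (3), $Y$) unobstructed, so $\dim_{(X)}\Hi(\PP^n)$ and $\dim_{(Y)}\Hi(\PP^n)$ are the dimensions of the respective tangent spaces. Hypotheses (1), (2), (3) make both projections smooth at $(X \subset Y)$: smoothness of $p$ is the lifting carried out above, while smoothness of $q$ follows from (1), which guarantees that deformations of the section account for all subscheme-deformations of $X$ in $Y$ via $N_{X/Y}$. The fibre of $q$ over $(Y)$ is the space of regular sections of $M_{\cB}(a_{t+c-2})$ modulo scalars, of dimension $\dim M_{\cB}(a_{t+c-2})_{0}-1$ by \eqref{Mi} and \eqref{Mc0}, whereas the fibre of $p$ over $(X)$ has tangent space $\ _0\!\Hom_R(I_Y, I_{X/Y})$, the kernel in \eqref{Di} of $\ _0\!\Hom_R(I_Y, B) \to \ _0\!\Hom_R(I_Y, A)$. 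Hence $\dim \cD = \dim_{(X)}\Hi(\PP^n) + \ _0\!\hom_R(I_Y, I_{X/Y}) = \dim_{(Y)}\Hi(\PP^n) + \dim M_{\cB}(a_{t+c-2})_{0}-1$, and subtracting yields the asserted formula.

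The main obstacle I expect is the precise bookkeeping in hypothesis (1): when $\depth_{I(Z)}B \ge 3$ one has the clean isomorphism $M_{\cA}(a_{t+c-2}) \cong N_{X/Y}$ of \eqref{DiMi}, but in the weaker case $\depth_{I(Z)}B \ge 2$ one only has an injection, so the identification of the $q$-fibre and the regularity of the lifted section must instead be extracted from injectivity of $\rho^1$, using the comparison \eqref{NM} between global $\Ext$, cohomology on $U = Y - Z$, and local cohomology $H^{i}_{I(Z)}$ to control the discrepancy. A secondary technical point is verifying rigorously that the obstruction to lifting the pair genuinely lands in $\ _0\!\Ext^1_{B}(I_Y/I_Y^2, I_{X/Y})$ and maps to the $Y$-obstruction under $\tau_{X/Y}$, which is exactly what lets the injectivity hypothesis (2) do its work.
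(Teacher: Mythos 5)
Your architecture matches the paper's: the pair $X\subset Y$ is studied through the Hilbert flag scheme, hypothesis (2) together with the unobstructedness of $Y$ gives smoothness of the projection to $(X)$ (this is exactly Remark~\ref{HilbFlag}, quoted from \cite{K04}), hypothesis (3) produces $\cB_T$, a section of $M_{\cB_T}(a_{t+c-2})$ produces the extra column $v_T$, and the dimension formula follows from the two fibre dimensions $\ _0\!\hom_R(I_Y,I_{X/Y})$ and $\dim M_{\cB}(a_{t+c-2})_0-1$ via the diagram \eqref{51bis} with $\delta=0$. The construction of the section itself needs only $\depth_{I(Z)}B\ge 2$ (sheafify \eqref{Di}, restrict to $U_T$, dualize, take $H^0_*(U_T,-)$), and your tangent-space computation for the fibre of $q$ is the paper's \eqref{roseq}.

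There is, however, a genuine gap at the central step, and it sits exactly where you relegate the difficulty to ``bookkeeping''. Having built $\cA_T=[\cB_T,v_T]$, you must prove $R_T/I_t(\cA_T)=A_T$, i.e.\ that the ideal $I'_T$ generated by the section coincides with the given $I_{X_T/Y_T}=\ker(B_T\to A_T)$. These are two a priori different flat deformations of the same ideal $I_{X/Y}\hookrightarrow B$, and the tangent-level identification $N_{X/Y}\simeq M_{\cA}(a_{t+c-2})$ (or its degree-zero analogue) only tells you they agree over the dual numbers; it does not force them to agree over a general local $T$. The paper closes this by induction on $T_r=T/\fm_T^r$: the difference of the two lifts of $I_{X_S/Y_S}$ as graded $B_{T_r}$-modules is an element of $\ _0\!\Ext^1_B(I_{X/Y},I_{X/Y})\otimes_k(\fm_T^{r-1}T_r)$ whose image under $\rho^1$ is a difference $o_1-o_2$ of obstructions to deforming the morphism into $B$; both vanish because both modules actually are ideals of $B_{T_r}$, so injectivity of $\rho^1$ (or its vacuous truth when $\depth_{I(Z)}B\ge 3$, by Remark~\ref{52NEW}) kills the difference, and the local criterion of flatness then upgrades $A'\otimes_T T_r\simeq A_T\otimes_T T_r$ for all $r$ to $A'=A_T$. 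This is the real content of hypothesis (1) --- its omission is precisely what made the zero-dimensional results of \cite{KMMNP}, \S 10 inaccurate (Remark~\ref{correctKM}) --- and your proposal does not contain the idea; your stated worries about ``the regularity of the lifted section'' and ``the identification of the $q$-fibre'' are not where the argument can fail.
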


\begin{remark}\label{52NEW}
{\rm If $\depth_{I(Z)}B \ge 3$, then $\depth_{I(Z)}I_{X/Y} \ge 3$ and it
  follows from \eqref{NM} that
 $$_0\!\Ext^1_B(I_{X/Y},I_{X/Y}) \ =  \  _0\!\Ext^1_B(I_{X/Y},B) \ = \ 0 . $$ 
}
\end{remark}
\begin{remark}\label{HilbFlag} {\rm Let $\GradAlg(H_2,H_1)$ be the
    ``postulation'' Hilbert-flag scheme, i.e.  the representing object of the
    functor deforming surjections $(B \rightarrow A)$ of graded quotients of
    $R$ of positive depth at  $\goth m$, or equivalently flags $(X \subset Y)$
    of closed 
    subschemes of $ \PP^n$  with Hilbert functions $H_Y=H_B=H_2$ and $H_X=H_A=
    H_1$. In \cite{K04}, Prop.\! 4 (iii), we use theorems
    of Laudal on deformations of a category (\cite{L}) to show that the
    forgetful 
    morphism $$\GradAlg(H_2,H_1) \ra \GradAlg(H_1)$$ induced by $(X \subset Y)
    \ra (X)$, is} smooth {\rm and has fiber dimension $_0\!
    \hom_R(I_{Y},I_{X/Y})$ at $(X \subset Y)$ provided $B$ is unobstructed and
    (2) of Proposition~\ref{mainTechn} holds. By \eqref{Grad} this conclusion
    holds for the corresponding forgetful map from the Hilbert-flag scheme
    into the usual Hilbert scheme provided $X$ and $Y$ are ACM and $\dim X \ge
    1$.}
\end{remark}
\begin{proof} Let $R_T \to A_T$ be any graded deformation of $R \to A$ to a
  local ring $T$. By the smoothness of the forgetful map of
  Remark~\ref{HilbFlag}, there is a graded deformation $R_T \to B_T$ of $R \to
  B$ and a morphism $ B_T \to A_T$. By the assumption (3) there exists a
  matrix ${\cB_T}$ such that $B_T=R_T/I_t(\cB_T)$. By Lemma~\ref{defalpha}
  $\cB_T$ also defines a deformation $ M_{\cB_T}$ of $ M_{\cB}$.

  We will prove that there is a matrix $\cA_T$ such that $A_T=R_T/I_t(\cA_T)$
  and such that we get $\cB_T$ by deleting the last column of $\cA_T$. Looking
  at \eqref{Mi} and the text before and after \eqref{Mi}, we see that if we
  can find a section $\ B_T \to M_{\cB_T}(a_{t+c-2})$ which reduces to $\ B
  \to M_{\cB}(a_{t+c-2})$ via $(-) \otimes_T
  k$, 
  we can use this section to define a column $v_T$ which allows us to put
  $\cA_T:=[\cB_T,v_T]$. 
  Now since we have a
  deformation $ B_T \to A_T$ of $ B \to A$, it follows that
  $I_{X_T/Y_T}:=\ker(B_T \to A_T)$ is a deformation of $I_{X/Y} \simeq
  M_{\cB}(a_{t+c-2})^*$. If we sheafify and restrict to $U$ we get an
  isomorphism $\widetilde I_{X/Y}\arrowvert_U \simeq \widetilde
  M_{\cB}(a_{t+c-2})^* \arrowvert_U$ of invertible sheaves. Hence the flat
  sheaves $(\widetilde I_{X_T/Y_T})^*$ and $\widetilde M_{\cB_T}(a_{t+c-2})$
  are also isomorphic on the set $U_T$ which corresponds to $U$. Taking global
  sections, $H_{*}^{0}(U_T,-)$, of $\widetilde B_T \to (\widetilde
  I_{X_T/Y_T})^*$, we get a map which fits into a commutative diagram
 $$
 \xymatrix{\ \ B_T \ \ \ar@{->}[d] \ar[r] & \ \ \ar@{->}[d] H_{*}^{0}(U_T,
   \widetilde M_{\cB_T} (a_{t+c-2})) & \simeq \ \ \ \ M_{\cB_T}(a_{t+c-2}) \\ \
   \ B \ \ \ar[r] & \ \ H_{*}^{0}(U,\widetilde M_{\cB}(a_{t+c-2})) & \simeq \ \
   \ \ M_{\cB}(a_{t+c-2}) }
 $$
 where the lower isomorphism follows from the fact that $M_{\cB}$ is maximally
 CM, i.e. from $\depth_{I(Z)} M_{\cB} = \depth_{I(Z)}B \ge 2$. Note that since
 an $ M_{\cB}$-regular sequence lifts to an $ M_{\cB_T}$-regular sequence, we
 also have sufficient depth to get the upper isomorphism. Hence we get a
 section $\ B_T \to M_{\cB_T}(a_{t+c-2})$ and an induced matrix $\cA_T$, as
 required.

 Let $A'=R_T/I_t(\cA_T)$. We {\it claim} that $A'=A_T$, i.e. that $I_T'
 =I_{X_T/Y_T}$ where $I_T'=\ker(B_T \to A')$. Let $T_r:=T/{\mathfrak m}_T^r$,
 $B_{T_r}:=B_T \otimes _T {T_r}$, $I_{X_{T_r}/Y_{T_r}} := \ker(B_{T_r} \to A_T
 \otimes_T {T_r})$, $I_{T_r}'= \ker(B_{T_r} \to A' \otimes_T T_r)$ and
 $S:=T_{r-1}$. To prove the claim we first show that $ I_{T_r}'
 =I_{X_{T_r}/Y_{T_r}}$ for every integer $r > 0$. To see that this follows
 from the assumption (1), we suppose by induction that $I_S'= I_{X_S/Y_S}$ as
 ideals of $B_S$. Then $I_{T_r}'$ and $I_{X_{T_r}/Y_{T_r}}$ are two
 deformations of the same ideal $I_{X_S/Y_S} \hookrightarrow B_S$ to ${T_r}$
 and their difference, as {\it graded $B_{T_r}$ modules}, corresponds to an
 element, ${\rm diff}$, of $\ _0\!\Ext^1_B(I_{X/Y},I_{X/Y}) \otimes_k
 ({\mathfrak m}_T^{r-1} \cdot T_r)$ which via $\rho^1$ maps to a difference,
 $o_1-o_2 \in \ _0\!\Ext^1_B(I_{X/Y},B) \otimes_k ({\mathfrak m}_T^{r-1} \cdot
 T_r)$ where $o_i$ are the following obstructions. One of them, say $o_1$
 (resp. the other $o_2$) is the obstruction of deforming the {\it graded
   morphism} $I_S' \hookrightarrow B_S$ (i.e. the ideal) to a graded morphism
 between $I_T'$ and $B_T$ (resp. between $I_{X_{T_r}/Y_{T_r}}$ and $B_{T_r}$),
 cf. \cite{K04}, Rem.\! 3 for a similar situation. Since $I_T'$ and
 $I_{X_{T_r}/Y_{T_r}}$ are {\it ideals} in $B_{T_r}$, such graded morphisms
 exist. Hence $o_i=0$ for $i=1,2$, whence ${\rm diff}=0$ by the injectivity of
 $\rho^1$, and we conclude that $I_{T_r}' =I_{X_{T_r}/Y_{T_r}}$.

 To get the claim let $A'':=B_T/(I_T' + I_{X_T/Y_T})$. It suffices to show
 that the natural maps $A' \to A''$ and $A_T \to A''$ are isomorphisms. Note
 that $A'' \otimes_T k \simeq A'' \otimes_{R_T} R \simeq A$ and that we have
 similar isomorphisms for $A'$ and $A_T$. Notice also that every maximal ideal
 of $R_T$ lies over ${\mathfrak m}_T$. Hence we get both isomorphisms by the 
 lemma of Nakayama, Azumaya and Krull if we can show that $A''$ is $T$-flat.
 But by the proof in the paragraph above the induced maps $A' \otimes_T T_r
 \to A'' \otimes_T T_r$ are isomorphism for every $r > 0$. It follows that
 $A'' \otimes_T T_r$ is $ T_r$-flat since $A' \otimes_T T_r$ is! Since $A''$
 is idealwise separated for ${\mathfrak m}_T$, we get that $A''$ is $T$-flat
 by the local criterion of flatness (see Thm.\! 20.C of \cite{Mat}).

 It remains to prove the dimension formula. Recall that there is a
 ``standard'' diagram (whose square is cartesian)
\begin{equation} \label{51bis}
  \begin{array}{ccccccccc}  & &  & & 0 \\
     & & & &  \downarrow \\  & & & & _0\!\Hom_R(I_{Y},I_{X/Y}) \\ &
    & & &   \downarrow \\ & & A^1 & \stackrel {T_{pr_1}}
    { \longrightarrow} & _0\!\Hom_R(I_{Y},B)
    \\  & &  \downarrow  &  \smallbox  & \downarrow \\ 
    _0\!\Hom (I_{X/Y},A) & \hookrightarrow & _0\!\Hom (I_{X},A) &
    \ra & _0\!\Hom_R(I_{Y},A) & \stackrel {\delta} { \longrightarrow} \ _0\!
    \Ext^1_{B}(I_{X/Y},A) \\ & & &
    & \downarrow  \\
    & &  & & 0
\end{array}
\end{equation}
which defines the tangent space $A^1$ of the Hilbert flag scheme
$\GradAlg(H_2,H_1)$ at $(B \to A)$ and where the morphisms $T_{pr_1}$ and
$\delta$ are natural maps (cf. \cite{K04}, (10) and note that the algebra
cohomology group $_0 {\rm H}^2(B,A,A) \simeq \ _0\!\Ext^1_{B}(I_{X/Y},A)$ (cf.
\cite{K04}, \S 1.1)). 
Under the assumption (2) the vertical sequence
is exact. We {\it claim} that $\delta = 0$. To see it, it suffices to prove
that $T_{pr_1}$ is surjective. The cartesian diagram is, however, well
understood in terms of the deformation theory of the Hilbert flag scheme.
Indeed if we take an arbitrary deformation $B_S$ of $B$ to the dual numbers
$S:=k[t]/(t^2)$, then $T_{pr_1}$ is surjective provided we can prove that
there is a deformation $(B_S \to A_S)$ of $(B \to A)$ to $S$. The latter
follows from the first part of the proof, or simply, from the assumption $(3)$
because we by (3) get $B_S=R_S/I_t(\cB_S)$ for some matrix $ \cB_S$ and we can
take $ \cA_S=[ \cB_S,v_S]$ where $v_S$ is any lifting of the last column of
$\cA$ to $S$. Letting $A_S:=R_S/I_t(\cA_S)$ we get the claim.

Since we have $\dim_{(X)}\Hi (\PP^{n})=\ _0\!\hom (I_{X},A)$ and
$\dim_{(Y)}\Hi (\PP^{n})=\ _0\!\hom (I_{Y},B)$ by Lemma~\ref{unobst}, we get
the dimension formula from the big diagram in which $\delta=0$ provided we can
prove that $\ _0\!\hom (I_{X/Y},A)=\dim M_{\cB}(a_{t+c-2})_0-1$. To see it we
apply $\Hom (I_{X/Y},-)$ onto $0 \to I_{X/Y} \to B \to A \to 0$. If we
use that $\Hom (I_{X/Y},B) \simeq M_{\cB}(a_{t+c-2})$, see
\eqref{DiMi}, we get the exact sequence
\begin{equation} \label{roseq}
0 \to B \to  M_{\cB}(a_{t+c-2}) \to  \Hom (I_{X/Y},A) \to 
\Ext^1_B(I_{X/Y},I_{X/Y})\to \Ext^1_B(I_{X/Y},B),
\end{equation} and we conclude by the assumption $(1)$.
\end{proof}
\begin{remark} \label{diamnoninj} Suppose $\tau_{X/Y}$ is not injective. Then
  the vertical sequence in the diagram \eqref{51bis} is not exact, and
  $\delta$ may be non-zero. It is, however, easy to enlarge the diagram
  \eqref{51bis} to a diagram of exact horizontal and vertical sequences by
  including $\ker \tau_{X/Y}$ and $\im \delta$ in the diagram. From this
  enlarged diagram it follows that
$$_0\!\hom (I_{X},A)= \ _0\!\hom (I_{X/Y},A) +   h^{0}({\cN}_Y)- \
_0\!\hom (I_{Y},I_{X/Y})+ \dim \ker \tau_{X/Y}- \dim \im \delta$$ since we
have $ \Hom (I_{Y},B) \simeq H^{0}_*(Y,{\cN}_Y)$ by \eqref{NM} and
$\depth_{\goth m} B \ge 2$. The displayed formula also holds if
$\tau_{X/Y}$ is injective.
\end{remark}
\begin{theorem} \label{MainThm2} Suppose either $c=2$ and $n \ge 2$, \ {\rm
    {\underline or}} \  $3\le c \le 4$, $n-c \ge 2$ and $a_{i-\min (3,t)}\ge
  b_{i}$ for $\min (3,t)\le i \le t$. 
  If $W(\underline{b};\underline{a}) \ne \emptyset$, then $\overline{
    W(\underline{b};\underline{a})}$ is a generically smooth irreducible
  component of $\Hi (\PP^{n})$ of dimension $$\lambda_c + K_3+...+K_c \ .$$
  Moreover every deformation of a general $ (X) \in
  W(\underline{b};\underline{a})$ comes from deforming $\cA$.
\end{theorem}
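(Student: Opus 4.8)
The plan is to argue by induction on the codimension $c$, reducing the codimension-$c$ case to the codimension-$(c-1)$ case through Proposition~\ref{mainTechn} applied to the flag pair $X=X_c\subset Y:=X_{c-1}=\Proj(B)$ of \eqref{flag}, where $\cB$ is obtained by deleting the last column of $\cA$. Both $X$ and $Y$ are taken general in $W(\underline{b};\underline{a})$ and $W(\underline{b};\underline{a'})$ respectively, with $\underline{a'}=a_0,\dots,a_{t+c-3}$; note $W(\underline{b};\underline{a'})\ne\emptyset$ by Corollary~\ref{WWs}, and both schemes are good determinantal. The base case $c=2$ (with $n\ge 2$) is Ellingsrud's theorem \cite{elli}: there $\overline{W(\underline{b};\underline{a})}$ is a generically smooth irreducible component of $\Hi(\PP^n)$, every deformation of a general such $X$ comes from deforming its matrix, and its dimension is $\lambda_2$ \cite{elli}.

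For the inductive step, $c=3$ or $c=4$, I verify hypotheses (1)--(3) of Proposition~\ref{mainTechn}. Hypothesis (3), that every deformation of $Y$ comes from deforming $\cB$, is exactly the conclusion of the theorem in codimension $c-1$, whose hypotheses ($n-(c-1)\ge 2$ and $a_{i-\min(3,t)}\ge b_i$) are inherited from those at hand. For the depth input, observe that the standing assumption $a_{i-\min(3,t)}\ge b_i$ is precisely the condition of Remark~\ref{dep} with $\alpha=3$, so $\codim_{X_{c-1}}Sing(X_{c-1})\ge \min(5,c+1)\ge 3$; since every non-l.c.i.\ point of $Y\hookrightarrow\PP^n$ is singular, the non-l.c.i.\ locus $Z\subset Y$ satisfies $\codim_Y Z\ge 3$, whence $\depth_{I(Z)}B\ge 3$ as $B$ is Cohen--Macaulay. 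Thus the first alternative of hypothesis (1) holds, and Remark~\ref{52NEW} yields $ {}_0\!\Ext^1_B(I_{X/Y},I_{X/Y})={}_0\!\Ext^1_B(I_{X/Y},B)=0$.

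It remains to check hypothesis (2), the injectivity of $\tau_{X/Y}$, and this is where the real work lies. Using \eqref{NM} (valid since $\depth_{I(Z)}B\ge 3$ and the conormal sheaf $\widetilde{I_Y/I_Y^2}$ is locally free on $U=Y-Z$), I identify $\tau_{X/Y}$ with the map $H^1_*(U,\cH om_{\cO_Y}(\widetilde{I_Y/I_Y^2},\widetilde{I_{X/Y}}))\to H^1_*(U,\cH om_{\cO_Y}(\widetilde{I_Y/I_Y^2},\cO_Y))$ induced by $\widetilde{I_{X/Y}}\hookrightarrow\cO_Y$. Applying $\cH om_{\cO_Y}(\widetilde{I_Y/I_Y^2},-)$ to $0\to\widetilde{I_{X/Y}}\to\cO_Y\to\cO_X\to 0$ on $U$ (exact on the left since the conormal sheaf is locally free), the kernel of this map is the image of $H^0_*(U,\cH om_{\cO_Y}(\widetilde{I_Y/I_Y^2},\cO_X))$, so injectivity amounts to a surjectivity/vanishing statement for these restricted normal-sheaf sections. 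I expect the main obstacle to be exactly this: controlling $\ker\tau_{X/Y}$ requires the explicit $\Ext^1$-vanishing computations of \cite{KM} and \cite{KMMNP}, which are available precisely in the range $3\le c\le 4$ with $n-c\ge 2$ and fail beyond it.

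Granting (1)--(3), Proposition~\ref{mainTechn} shows that every deformation of $X$ comes from deforming $\cA$, which is the final assertion. By Lemma~\ref{unobst} this makes $A$, and hence $X$ (as $\dim X=n-c\ge 2\ge 1$, using \eqref{Grad}), unobstructed, so $\Hi(\PP^n)$ is smooth at the general point of the irreducible set $W(\underline{b};\underline{a})$ (Corollary~\ref{WWs}) and $\overline{W(\underline{b};\underline{a})}$ is an irreducible component; as a dense open subset consists of smooth points, the component is generically smooth. Finally its dimension equals $\dim W(\underline{b};\underline{a})=\lambda_c+K_3+\dots+K_c$ by Theorem~\ref{main}; equivalently, this value is recovered by feeding the inductive dimension formula of Proposition~\ref{mainTechn} into the recursion of Proposition~\ref{main1}.
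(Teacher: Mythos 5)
Your proposal matches the paper's own argument: the theorem is proved for $c>2$ by applying Proposition~\ref{mainTechn} inductively to the flag \eqref{flag} starting from the codimension-2 case, with the depth hypothesis (1) coming from Remark~\ref{dep} (via $a_{i-\min(3,t)}\ge b_i$, i.e.\ $\alpha=3$) and the injectivity of $\tau_{X/Y}$ in hypothesis (2) imported from the $\Ext^1$-vanishing results of \cite{KM} and \cite{KMMNP} in the range $3\le c\le 4$, $n-c\ge 2$, exactly as you anticipate. The one adjustment worth making is in the base case: the assertion that every deformation of a codimension-2 $X$ comes from deforming $\cA$, in the strong sense of Definition~\ref{everydef} needed to feed hypothesis (3) at the first inductive step, is what Lemma~\ref{HilbBu} (a canonical-module/double-dualizing argument) is there to provide, so you should invoke that lemma rather than attribute this form of the statement directly to \cite{elli}.
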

For $c> 2$ this result is really \cite{KM}, Thm.\! 5.1 and Cor.\! 5.3 except
for the final statement. Since, however, the proof of Theorem~\ref{MainThm2}
is to apply Proposition~\ref{mainTechn} inductively to the flag \eqref{flag},
starting with the codimension 2 case where we by Lemma~\ref{HilbBu} know that
the final statement holds,
the proof of \cite{KM} extends to get Theorem~\ref{MainThm2} for $c > 2$. If
$c=2$ we get the other conclusions (and even more) from \cite{elli} for $n >
2$ and from works of Gotzmann and others for $n=2$ as explained in \cite{K07},
Rem.\! 22 (i), or see \cite{KM}, Rem.\! 4.6 for a direct approach to $\dim
W(\underline{b};\underline{a})= \lambda_2$. (We also get all conclusions for
$c=2$ by combining Lemma~\ref{HilbBu} and Lemma~\ref{unobst}.)

\begin{lemma} \label{HilbBu} If $X=\Proj(A)$, $A=R/I_t(\cA)$, is a standard
  determinantal scheme of codimension $2$ in $\PP^{n}$ and $n \ge 2$, then
  every deformation of $X$ comes from deforming 
  $\cA$.
\end{lemma}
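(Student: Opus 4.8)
The plan is to prove that for codimension $2$ standard determinantal schemes, every deformation comes from deforming $\cA$. The cleanest approach is to verify the hypotheses of Proposition~\ref{mainTechn} in the base case, but since codimension $2$ is itself the base of the induction, I would instead argue directly using the Hilbert-Burch structure theorem. The key observation is that for a codimension $2$ standard determinantal scheme, the Eagon-Northcott complex \eqref{EN} reduces to the Hilbert-Burch resolution $0 \to F \to G \to R \to A \to 0$, where $\varphi: F = \bigoplus_{i=1}^t R(b_i) \to G = \bigoplus_{j=0}^{t} R(a_j)$ and $I_X = I_t(\cA)$ is generated by the maximal minors. The Hilbert-Burch theorem tells us that \emph{any} codimension $2$ arithmetically Cohen-Macaulay ideal arises this way, and moreover that the matrix $\cA$ is determined (up to the obvious equivalence) by $A$.

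First I would let $T$ be a local ring and let $R_T \to A_T$ be an arbitrary graded deformation of $R \to A$. By Lemma~\ref{defalpha} we already know that deforming the matrix produces flat families, so what must be shown is the converse: that the given deformation $A_T$ is \emph{necessarily} of the form $R_T/I_t(\cA_T)$. Since $A$ is Cohen-Macaulay of codimension $2$ with depth $\ge \min(1,\dim A)$, its minimal free resolution over $R$ is the length-two Hilbert-Burch complex, and flatness of $A_T$ over $T$ means the resolution deforms: by the standard lifting of minimal free resolutions of modules of finite projective dimension over $R_T$ (using that $T$-flatness of $A_T$ forces the resolution to remain a resolution and to have constant Betti numbers, since the Betti numbers are determined by the Hilbert function which is constant by Lemma~\ref{defalpha}), we obtain a $T$-deformation $\varphi_T: F_T \to G_T$ of $\varphi$ with $A_T = \coker$ of the corresponding $R$-linear map in degree $0$.

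The heart of the argument is then to recognize that the deformed resolution $0 \to F_T \xrightarrow{\varphi_T} G_T \to R_T \to A_T \to 0$ is again of Hilbert-Burch type, so that $I_{A_T} = I_t(\cA_T)$ where $\cA_T$ represents $\varphi_T^*$. This is where the rank hypothesis $t+1 = t + c - 1$ matters: in codimension $2$ the map $G_T \to R_T$ realizing the presentation is, up to sign, given precisely by the maximal minors of $\cA_T$, by the graded/relative version of the Hilbert-Burch theorem (one can invoke that the Fitting ideal $I_t(\cA_T)$ equals the ideal of $A_T$ because the complex is a resolution and $A_T$ is $T$-flat of the correct codimension; cf. the functoriality of the Eagon-Northcott complex noted in the proof of Lemma~\ref{defalpha}). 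Since $\deg f_{ij,T} = a_j - b_i$ is forced by the grading, the matrix $\cA_T$ is of the admissible form, giving $A_T = R_T/I_t(\cA_T)$ as required.

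The main obstacle I expect is the lifting step: one must be careful that the minimal free resolution genuinely deforms without a jump in Betti numbers, i.e. that the deformed presentation map $\varphi_T$ has entries of the prescribed degrees and that no new generators or relations appear. This is controlled by the constancy of the Hilbert function (Lemma~\ref{defalpha}) together with the rigidity of length-two resolutions over a regular ring, but it requires invoking the relative Hilbert-Burch theorem in the graded setting over an arbitrary local base $T$ rather than just over a field. Once this is in place, the identification of $I_{A_T}$ with the maximal minors is the content of Hilbert-Burch, and the degree bookkeeping is routine.
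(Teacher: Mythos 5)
Your argument is essentially correct, but it takes a genuinely different route from the paper's. The paper never lifts the Hilbert--Burch resolution of $A$ directly; instead it deforms the \emph{canonical module}: since $\Ext^i_R(A,R)=0$ for $i\neq 2$, the module $K_{A_T}:=\Ext^2_{R_T}(A_T,R_T)(-n-1)$ is automatically a flat graded deformation of $K_A$, its lifted presentation $G_T^*\to F_T^*\to K_{A_T}(n+1-\ell_1)\to 0$ produces the matrix $\cA_T$, and the identification $A_T\simeq R_T/I_t(\cA_T)$ is then obtained by dualizing twice, via $A'\simeq \Ext^2_{R_T}(K_{A'},R_T)(-n-1)$ and the analogous isomorphism for $A_T$; this sidesteps any appeal to a relative Hilbert--Burch theorem. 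Your route --- lift the length-two resolution $0\to F_T\to G_T\to R_T\to A_T\to 0$ and apply Hilbert--Burch over $R_T$ --- is the more classical one (close in spirit to Ellingsrud's proof, cited as \cite{elli}) and does work: Hilbert--Burch over an arbitrary Noetherian ring gives $I_{A_T}=a\cdot I_t(\cA_T)$ with $a$ a nonzerodivisor, and since $a$ is homogeneous of degree $0$, i.e.\ $a\in T$, and reduces to a nonzero element of $k$, it is a unit of the local ring $T$; you should say this explicitly rather than asserting that the presentation map ``is, up to sign, given precisely by the maximal minors.'' Two of your justifications need repair, though neither affects the conclusion: first, the constancy of the Hilbert function of $A_T$ is not Lemma~\ref{defalpha} (which goes in the opposite direction, from matrices to flat families) but is simply what $T$-flatness of a graded quotient over a local base means; second, Betti numbers are \emph{not} determined by the Hilbert function, and you do not need them to be --- the existence of the lifted exact complex $0\to F_T\to G_T\to R_T\to A_T\to 0$ with the same ranks and twists follows from $T$-flatness alone, by lifting the minimal resolution step by step and using the graded Nakayama lemma to check surjectivity of each lifted map and vanishing of the final kernel. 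With those two points fixed, your proof is a complete and valid alternative to the one in the paper.
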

\begin{proof}
  Let $\cA =(f_{ij})$ be a homogeneous $t \times (t+1)$ matrix which
  represents the morphism $\varphi^*$ of \eqref{gradedmorfismo} and let $R_T
  \to A_T$ be a graded deformation of $R \to A$ to a local ring $T$. To see
  that $A_T$ is of the form $A_T=R_T/I_t(\cA_T)$ for some matrix $\cA_T$
  reducing to $\cA$ via $(-) \otimes_T k$, we consider the canonical module
  $K_A=\Ext^2_R (A,R)(-n-1)$. Note that since $c=2$ we have
  $K_A(n+1-\ell_1)=M$, where $G^* \stackrel {\varphi^*} { \longrightarrow} F^*
  \to M \to 0$ is a part of the Buchsbaum-Rim complex, cf.\! \eqref{BR} and
  \eqref{ell}. Now we observe that $K_{A_T}:=\Ext^2_{R_T} (A_T,R_T)(-n-1)$ is a
  (flat) graded deformation of $K_A$ to $T$ because $\Ext^i_R (A,R) = 0$ for
  $i \ne 2$ (\cite{JS}, Proposition (A1)).
  It follows that $K_{A_T}(n+1-\ell_1)=\coker (\varphi_T^*)$ where
  $\varphi_T^*$ corresponds to some matrix $\cA_T:=(f_{ij,T})$, as in
  \eqref{al}.

  Let $A':=R_T/I_t(\cA_T)$. It suffices to show that $A'$ and $A_T$ are
  isomorphic as $R_T$ quotients. Looking to the Eagon-Northcott complex
  associated to $A'$ over $R_T$ and dualizing, i.e. applying $
  \Hom_{R_T}(-,R_T)$ to it, we get back the part of the Buchsbaum-Rim complex
  where $\varphi_T^*$ appeared (up to twist). It follows that $K_{A_T} \simeq
  K_{A'}$ where $K_{A'}:=\Ext^2_{R_T} (A',R_T)(-n-1)$. Note that the
  Buchsbaum-Rim complex above is a free resolution of $K_{A'}(n+1-\ell_1)$ over
  $R_T$ since it is $T$-flat and reduces to a $R$-free resolution via a $(-)
  \otimes_T k$. Applying $ \Hom_{R_T}(-,R_T)$, we get $A' \simeq \Ext^2_{R_T}
  (K_{A'},R_T)(-n-1)$. Similarly by dualizing twice an $R_T$-free resolution
  of $A_T$ we show that $A_T \simeq \Ext^2_{R_T} (K_{A_T},R_T)(-n-1)$ and we are
  done.
\end{proof}
\begin{remark} If we apply Proposition~\ref{mainTechn} successively to the
  flag \eqref{flag} it is straightforward to generalize Thm.\! 5.1 of
  \cite{KM} to the zero dimensional case, i.e. we may replace the condition $n
  \ge 1$ of \cite{KM}, Thm.\! 5.1 by $n \ge 0$ provided we in (i) of Thm.\!
  5.1 (and correspondingly in (ii) of Thm.\! 5.1) replace the $ \depth \ge 3$
  condition by the condition (1) of Proposition~\ref{mainTechn}.
\end{remark}
%

There is a variation to Proposition~\ref{mainTechn} that we will use in the
case $n=c$ ($\dim X =0$) in which we mainly replace the injectivity assumption
in (1) for the $\Ext^1$-groups with the injectivity assumption for the
corresponding $\Ext^2$-groups. More precisely let
\begin{equation} \label{ro}
\rho^i: \ _0\!\Ext^i_B(I_{X/Y},I_{X/Y})\to \ _0\!\Ext^i_B(I_{X/Y},B).
\end{equation}
be the map induced by $I_{X/Y} \hookrightarrow B$. Then we have 
\begin{proposition} \label{varMainTechn} Let $X=\Proj(A) \subset Y=\Proj(B)$
  be good determinantal schemes defined by ${\cA}$ and ${\cB}$ where ${\cB}$
  is obtained by deleting the last column of ${\cA}$. Let $Z \subset Y$ be a
  closed subset such that $U := Y-Z \hookrightarrow \PP^n$ is a local complete
  intersection and suppose \vskip 2mm

  {\rm (1)} $_0\!\Ext^1_B(I_{X/Y},A) = 0$ (i.e.,
  $\rho^1$ surjective and $\rho^2$ injective) and $\depth_{I(Z)}B = 2 $,

  {\rm (2)} $\tau_{X/Y}: \ _0\! \Ext^1_{B}(I_{Y}/I^2_{Y},I_{X/Y})
  \hookrightarrow \ _0\! \Ext ^1_{B}(I_{Y}/I^2_{Y},B)$ is injective, and

  {\rm (3)} $Y$ is unobstructed (this is weaker than {\rm (3)} in
  Proposition~\ref{mainTechn}). \vskip 2mm

  Then $A$ is unobstructed and the postulation Hilbert scheme
  $\Hi^{H_A}(\PP^{n})$ satisfies
    $$\dim_{(X)}\Hi^{H_A}
    (\PP^{n})=\dim_{(Y)}\Hi^{p_Y} (\PP^{n})+\dim M_{\cB}(a_{t+c-2})_0-1-\ _0\!
    \hom_R(I_{Y},I_{X/Y}) + \dim \ker \rho^1.$$
\end{proposition}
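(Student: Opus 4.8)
The plan is to run the induction exactly as in Proposition~\ref{mainTechn}, but to extract unobstructedness from the geometry of the postulation Hilbert-flag scheme $\GradAlg(H_2,H_1)$ rather than from an explicit lifting of the matrix. I would work with its two forgetful projections, $p_1\colon \GradAlg(H_2,H_1)\to\GradAlg(H_1)=\Hi^{H_A}(\PP^n)$, $(X\subset Y)\mapsto(X)$, and $p_2\colon \GradAlg(H_2,H_1)\to\GradAlg(H_2)$, $(X\subset Y)\mapsto(Y)$; here $\GradAlg(H_2)=\Hi^{H_2}(\PP^n)$ is identified with $\Hi^{p_Y}(\PP^n)$ at $(Y)$ by \eqref{Grad}, since $Y$ is ACM of positive dimension. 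The strategy is: (i) show $\GradAlg(H_2,H_1)$ is smooth at $(X\subset Y)$ using $p_2$; (ii) transport this smoothness down to $\Hi^{H_A}(\PP^n)$ along the smooth map $p_1$; and (iii) read off the dimension formula by comparing the fibre dimensions of $p_1$ and $p_2$.

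For (i) I would use $p_2$. By hypothesis (3) the base $\GradAlg(H_2)$ is smooth at $(Y)$. The fibre of $p_2$ deforms the graded quotient $B\twoheadrightarrow A$ with $B$ fixed, a problem whose tangent space is $\ _0\!\Hom_B(I_{X/Y},A)$ and whose obstruction space is the relative algebra cohomology $\ _0 H^2(B,A,A)\simeq\ _0\!\Ext^1_B(I_{X/Y},A)$ (as recalled in the proof of Proposition~\ref{mainTechn}). Hypothesis (1) makes this vanish, so $p_2$ is smooth at $(X\subset Y)$ with fibre dimension $\ _0\!\hom_B(I_{X/Y},A)$; being smooth over a smooth base, the flag scheme is then smooth at $(X\subset Y)$ and $$\dim_{(X\subset Y)}\GradAlg(H_2,H_1)=\dim_{(Y)}\GradAlg(H_2)+\ _0\!\hom_B(I_{X/Y},A).$$

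Step (ii) is the crux, and the place where the weaker hypotheses force a different argument than in Proposition~\ref{mainTechn}. Because here $\rho^1$ is only surjective (and $\rho^2$ injective) rather than injective, I cannot conclude that every deformation of $X$ comes from deforming $\cA$, so Lemma~\ref{unobst} is unavailable; smoothness must instead be propagated through the flag scheme. By hypothesis (2) together with $B$ unobstructed, Remark~\ref{HilbFlag} gives that $p_1$ is smooth at $(X\subset Y)$ with fibre dimension $\ _0\!\hom_R(I_Y,I_{X/Y})$. Hence the induced local homomorphism $\cO_{\Hi^{H_A}(\PP^n),(X)}\to\cO_{\GradAlg(H_2,H_1),(X\subset Y)}$ is flat (smooth) and local, thus faithfully flat, with regular source by (i). Since regularity descends along a faithfully flat local homomorphism, $\Hi^{H_A}(\PP^n)$ is regular at $(X)$, i.e.\ $A$ is unobstructed.

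Finally, for (iii), the smoothness of $p_1$ also gives $\dim_{(X\subset Y)}\GradAlg(H_2,H_1)=\dim_{(X)}\Hi^{H_A}(\PP^n)+\ _0\!\hom_R(I_Y,I_{X/Y})$. Eliminating the flag dimension between this and the displayed identity from (i) yields $$\dim_{(X)}\Hi^{H_A}(\PP^n)=\dim_{(Y)}\Hi^{p_Y}(\PP^n)+\ _0\!\hom_B(I_{X/Y},A)-\ _0\!\hom_R(I_Y,I_{X/Y}).$$ It then remains to evaluate $\ _0\!\hom_B(I_{X/Y},A)$, for which I would reuse the exact sequence \eqref{roseq}: since $\depth_{I(Z)}B\ge 2$ identifies $\Hom_B(I_{X/Y},B)\simeq M_{\cB}(a_{t+c-2})$ via \eqref{DiMi} and $\Hom_B(I_{X/Y},I_{X/Y})\simeq B$ via \eqref{NM} (as $\widetilde I_{X/Y}$ is invertible on $U$), taking degree-zero parts shows that $\ _0\!\hom_B(I_{X/Y},A)=\dim M_{\cB}(a_{t+c-2})_0-1+\dim\ker\rho^1$, the last summand being the image of the connecting map. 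Substituting gives the claimed formula, with the extra term $\dim\ker\rho^1$ measuring exactly the deformations of $X$ not induced by $\cA$.
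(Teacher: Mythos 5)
Your proof is correct and follows essentially the same route as the paper's: both rest on Remark~\ref{HilbFlag} for the smoothness of the projection to $\Hi^{H_A}(\PP^{n})$, on the vanishing of ${}_0\mathrm{H}^2(B,A,A)\hookrightarrow\ _0\!\Ext^1_B(I_{X/Y},A)$ to lift the pair over an arbitrary deformation of $B$ supplied by hypothesis (3), and on \eqref{roseq} to produce the extra term $\dim \ker \rho^1$. The only difference is cosmetic: you package the lifting as smoothness of the second projection followed by descent of regularity along the first, and you replace the tangent-space diagram \eqref{51bis} by additivity of fibre dimensions, which amounts to the same computation.
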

\begin{proof} Let $T \rightarrow S$ be a small artinian surjection with kernel
  ${\mathfrak a}$, and let $R_S \to A_S$ be any graded deformation of $R \to
  A$ to $S$. By Remark~\ref{HilbFlag}, there is a graded deformation $R_S \to
  B_S$ of $R \to B$ and a morphism $ B_S \to A_S$. By assumption (3) and
  \eqref{Grad} there exists a deformation $R_T \to B_T$ of $R_S \to B_S$ to
  $T$. It is well known that the algebra cohomology group $_0 {\rm H}^2(B,A,A)
  \otimes_k {\mathfrak a}$ contains the obstruction of deforming $ B_S \to
  A_S$ further to $B_T$ and that there is an injection $_0 {\rm H}^2(B,A,A)
  \hookrightarrow \ _0\!\Ext^1_{B}(I_{X/Y},A)$ (\cite{SGA7}, exp.\! VI,
  \cite{K04}, \S 1.1). The rightmost group vanishes by (1), and it follows
  that $A$ is unobstructed.

  Finally we get the dimension formula from \eqref{51bis}. Indeed the
  arguments are almost exactly the same as in the proof of
  Proposition~\ref{mainTechn} with the variation that \eqref{roseq} now
  implies $$\ _0\!\hom (I_{X/Y},A)=\dim M_{\cB}(a_{t+c-2})_0-1 + \dim \ker
  \rho^1.$$
\end{proof}
\begin{remark} 
  We say that ``$A$ is unobstructed along any graded deformation of $B$''
  (call this phrase (*)) if for every small artinian surjection $T \rightarrow
  S$ and for {\it every} graded deformation $B_S \rightarrow A_S$ of $B
  \rightarrow A$ to $S$, there exists, for {\it every} graded deformation
  $B_T$ of $B_S$ to $T$, a graded deformation $B_T \rightarrow A_T$ reducing
  to $B_S \rightarrow A_S$ via $(-) \otimes_T S$. It is clear from the proof
  above that $ \ _0\!\Ext^1_{B}(I_{X/Y},A)=0$ implies (*) and moreover that we
  can generalize Proposition~\ref{varMainTechn} by replacing the assumption $
  \ _0\!\Ext^1_{B}(I_{X/Y},A)=0$ by (*).
\end{remark}
Now we come to the main results of this paper which are direct consequences of
the above propositions. We start with determinantal curves whose result we
will need in the zero dimensional case. Note that the result below is known
(\cite{KMMNP}, Cor.\! 10.15 and Rem.\! 10.9 for $c=3$, \cite{KM}, Rem.\! 5.4
and Cor.\! 5.7 for $4 \le c \le 5$) except for the final statement of (i) and
most statements on the codimension in (ii) and (iii). With notations as in
Proposition~\ref{mainTechn} we have
\begin{proposition} \label{Dim1Prop} Let $X=\Proj(A)$, $A=R/I_t(\cA)$, be
  general in $ W(\underline{b};\underline{a})$ and suppose $a_{i-\min (3,t)}\ge
  b_{i}$ for $\min (3,t)\le i \le t$, $\dim X =n-c=1$ and $3 \le c \le 5$ (and
  $char k =0$ if $c=5$). If $Y=\Proj(B)$ is defined by the vanishing of the
  maximal minors of ${\cB}$ where ${\cB}$ is obtained by deleting the last
  column of ${\cA}$, then the following statements are true:

  \vskip 2mm {\rm (i)} If $\tau_{X/Y}:\, _0\!
  \Ext^1_{B}(I_{Y}/I^2_{Y},I_{X/Y}) \rightarrow \! _0\! \Ext
  ^1_{B}(I_{Y}/I^2_{Y},B)$ is injective, then $X$ is unobstructed and
  $\overline{ W(\underline{b};\underline{a})}$ is a generically smooth
  irreducible component of $\Hi ^p(\PP^{n})$ of dimension $\lambda_c +
  K_3+...+K_c$.
  Moreover every deformation of $X$ comes from deforming ${\cA}$.

  {\rm (ii)} If $_0\! \Ext^1_{A}(I_{X}/I^2_{X},A)=0$, then $X$ is
  unobstructed, $\dim W(\underline{b};\underline{a})= \lambda_c +
  K_3+...+K_c$ and $$\codim_{\Hi ^p(\PP^{n})}\overline{
    W(\underline{b};\underline{a})}=\dim \ker \tau_{X/Y} - \ _0\! \ext
  ^1_{B}(I_{X/Y},A) \ .$$

  {\rm (iii)} We always have \ $\dim W(\underline{b};\underline{a})= \lambda_c
  + K_3+...+K_c$ and $$\codim_{\Hi ^p(\PP^{n})} \overline{
    W(\underline{b};\underline{a})} \le \dim \ker \tau_{X/Y} \ .$$ Moreover if
  $\ _0\! \Ext ^1_{B}(I_{X/Y},A)=0$, then we have equality in the codimension
  formula if and only if $X$ is unobstructed.
 \end{proposition}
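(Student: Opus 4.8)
The plan is to deduce all three parts by specializing Proposition~\ref{mainTechn} and Proposition~\ref{varMainTechn} to the curve case $\dim X = 1$, using the flag \eqref{flag} and induction on $c$. First I would set the stage: since $3 \le c \le 5$ and the depth hypothesis $a_{i-\min(3,t)}\ge b_i$ holds, Remark~\ref{dep} (with $\alpha = 3$) guarantees that for a general $X$ the flag members $X_i$ are l.c.i.\ outside subsets $Z_i$ of codimension $\ge \min(4,i+1)$, so that $\depth_{I(Z_i)}\cO_{X_{i+1}} \ge \min(4,i+1)$. In particular, for the last step $Y = X_{c-1} \supset X = X_c$ we have $\depth_{I(Z)}B \ge \min(4,c) \ge 3$ when $c \ge 4$, while for $c=3$ we have $\depth_{I(Z)}B \ge 3$ as well since $\min(4,3)=3$. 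Thus hypothesis (1) of Proposition~\ref{mainTechn} is satisfied for the terminal step in the depth-$\ge 3$ form, and by Remark~\ref{52NEW} the groups $_0\!\Ext^1_B(I_{X/Y},I_{X/Y})$ and $_0\!\Ext^1_B(I_{X/Y},B)$ both vanish there.

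For part (i), the only remaining hypothesis of Proposition~\ref{mainTechn} is (2), which is exactly the injectivity of $\tau_{X/Y}$ that we assume, together with hypothesis (3) that every deformation of $Y$ comes from deforming $\cB$. The latter is handled by induction: the base case is codimension $2$, where Lemma~\ref{HilbBu} gives it outright, and the inductive step is again Proposition~\ref{mainTechn} applied to the earlier flag steps $X_{i+1}\subset X_i$, whose hypotheses (1) and (2) follow from the depth estimates of Remark~\ref{dep} and the vanishing results of \cite{KM}/\cite{KMMNP} in the range $3 \le c \le 5$. Once every deformation of $X$ comes from deforming $\cA$, Lemma~\ref{unobst} yields unobstructedness of $X$ and that $\overline{W(\underline b;\underline a)}$ is an irreducible component of $\Hi^p(\PP^n)$; generic smoothness follows since $X$ is a smooth point of the Hilbert scheme, and the dimension $\lambda_c + K_3 + \cdots + K_c$ comes from the dimension formula of Proposition~\ref{mainTechn} combined with Theorem~\ref{main} (or Proposition~\ref{main1}) for the dimension of the component through $Y$.

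For parts (ii) and (iii), I would instead work at the level of tangent and obstruction spaces without assuming $\tau_{X/Y}$ injective, using the enlarged diagram of Remark~\ref{diamnoninj}. That remark gives
\[
_0\!\hom(I_X,A) = \ _0\!\hom(I_{X/Y},A) + h^0(\cN_Y) - \ _0\!\hom(I_Y,I_{X/Y}) + \dim\ker\tau_{X/Y} - \dim\im\delta,
\]
so the tangent space dimension at $(X)$ exceeds the expected component dimension $\dim W(\underline b;\underline a) = \dim W(\underline b;\underline a') + \dim M_\cB(a_{t+c-2})_0 - 1 - \ _0\!\hom(I_Y,I_{X/Y})$ precisely by $\dim\ker\tau_{X/Y} - \dim\im\delta$. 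Since $\delta$ lands in $_0\!\Ext^1_B(I_{X/Y},A)$ and $\dim\im\delta \le \ _0\!\ext^1_B(I_{X/Y},A)$, the codimension of the locus inside $\Hi^p(\PP^n)$ is at most $\dim\ker\tau_{X/Y}$, giving the inequality of (iii). Under the hypothesis $_0\!\Ext^1_A(I_X/I_X^2,A)=0$ of (ii), the obstruction space of $\Hi^p(\PP^n)$ at $(X)$ vanishes (via \eqref{Grad} and the identification of the obstruction space recalled in the Notation section), so $X$ is unobstructed and the tangent space equals the local dimension; then $\dim\im\delta = \ _0\!\ext^1_B(I_{X/Y},A)$ by an exactness/dimension count in the enlarged diagram, yielding the exact codimension formula. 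Finally, for the last statement of (iii), when $_0\!\Ext^1_B(I_{X/Y},A)=0$ we have $\delta=0$, so the tangent-space excess is exactly $\dim\ker\tau_{X/Y}$, and the codimension equals $\dim\ker\tau_{X/Y}$ if and only if the tangent space dimension equals the local dimension at $(X)$, i.e.\ if and only if $X$ is unobstructed.

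The main obstacle I anticipate is the bookkeeping in parts (ii) and (iii): correctly extracting $\dim\im\delta$ from the enlarged diagram and aligning it with $_0\!\ext^1_B(I_{X/Y},A)$, while simultaneously reconciling the tangent-space computation with the local dimension at $(X)$ through the unobstructedness (or its failure). The delicate point is that the inequality $\dim\im\delta \le \ _0\!\ext^1_B(I_{X/Y},A)$ is generally strict, so pinning down the \emph{exact} codimension in (ii) requires the vanishing of the ambient obstruction space to force the tangent dimension to coincide with the local dimension; verifying that this vanishing genuinely sharpens the inequality to an equality—rather than merely bounding it—is where the argument must be handled most carefully.
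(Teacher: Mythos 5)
Your overall route coincides with the paper's: Theorem~\ref{main} for $\dim W(\underline b;\underline a)$; Proposition~\ref{mainTechn} together with Lemma~\ref{unobst} and Theorem~\ref{MainThm2} applied to $(Y)\in W(\underline b;\underline{a'})$ (which is exactly your codimension-$2$-based induction, unpacked) for (i); and the diagram \eqref{51bis} / Remark~\ref{diamnoninj} for (ii) and (iii). The depth bookkeeping is right in substance, though your citation of Remark~\ref{dep} is slightly off target: the relevant point is that for general $X$ the hypothesis $a_{i-\min(3,t)}\ge b_i$ gives $\codim_Y \mathrm{Sing}(Y)\ge \min(5,c+1)>\dim Y=2$, so $Y$ is smooth, $Z=\emptyset$ and $\depth_{\goth m}B=3$; Remark~\ref{52NEW} and \eqref{roseq} then give $\ _0\!\hom(I_{X/Y},A)=\dim M_{\cB}(a_{t+c-2})_0-1$, which together with $h^0(\cN_Y)=\dim W(\underline b;\underline{a'})$ is the identification you use implicitly when matching the tangent-space formula of Remark~\ref{diamnoninj} with the dimension formula of Proposition~\ref{main1}.

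The one genuine gap is the step you yourself flag in (ii): the equality $\dim\im\delta=\ _0\!\ext^1_B(I_{X/Y},A)$. Your proposed mechanism --- that the vanishing of the ambient obstruction space forces the tangent dimension to coincide with the local dimension --- only yields unobstructedness; it says nothing about $\im\delta$, and the enlarged diagram of Remark~\ref{diamnoninj} by itself only gives $\dim\im\delta\le\ _0\!\ext^1_B(I_{X/Y},A)$. What actually closes the gap is that the horizontal row of \eqref{51bis} continues as the long exact sequence of algebra cohomology of $R\to B\to A$: the term following $_0{\rm H}^2(B,A,A)\simeq\ _0\!\Ext^1_B(I_{X/Y},A)$ is $_0{\rm H}^2(R,A,A)\simeq\ _0\!\Ext^1_A(I_X/I_X^2,A)$ (valid since $X$ is l.c.i.\ by Remark~\ref{dep}), so the very hypothesis of (ii) makes $\delta$ surjective --- the same vanishing that gives unobstructedness also kills $\coker\delta$. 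With that one ingredient supplied, your arguments for (ii) and (iii) go through exactly as in the paper.
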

 \begin{proof} In all cases we use Theorem~\ref{main} to get $\dim {
     W(\underline{b};\underline{a})}= \lambda_c + K_3+...+K_c$.

   (i) Since Theorem~\ref{MainThm2} applies to $
   W(\underline{b};\underline{a'}) \ni (Y)$ where $\underline{a'} = a_0,
   a_1,..., a_{t+c-3}$, we get (i) from Proposition~\ref{mainTechn},
   Remark~\ref{dep} and Lemma~\ref{unobst}.

   (ii) The vanishing of the $\Ext^1$-group of (ii) implies that $X$ is
   unobstructed ($X$ is l.c.i. by Remark~\ref{dep}), and moreover that $\im
   \delta \simeq \ _0\! \Ext ^1_{B}(I_{X/Y},A)$, cf.\! the diagram
   \eqref{51bis} and continue the horizontal exact sequence as a long exact
   sequence of algebra cohomology. Since we have $h^0(\cN_Y)- \dim
   W(\underline{b};\underline{a'})=0$ by Theorem~\ref{MainThm2} and $\
   _0\!\hom (I_{X/Y},A)=\dim M_{\cB}(a_{t+c-2})_0-1$ by \eqref{roseq} and
   Remark~\ref{52NEW}, we conclude by Remark~\ref{diamnoninj} and the final
   dimension formula of Theorem~\ref{main}.

   (iii) As in (ii) we get $ \ _0\!\hom (I_{X},A)- \dim
   W(\underline{b};\underline{a})= \dim \ker \tau_{X/Y}- \im \delta$ and hence
   the inequality of (iii). If the $\ _0\! \Ext ^1_{B}(I_{X/Y},A)$ vanishes,
   then $\im \delta = 0$, and since one knows that $X$ is unobstructed if and
   only if we have equality in $h^0(\cN_X) \ge \dim_{(X)} {\Hi ^p(\PP^{n})}$,
   we conclude easily. 
 \end{proof}
 \begin{remark} [for the case where the codimension of $X$ in $\PP^{n}$ is 3,
   i.e. $c=3$]
   \ \label{va}

   (i) Since $Y$ is licci (\cite{KMMNP}, Def.\! 2.10) for $c=3$ , we always
   have $ \ _0\! \Ext ^1_{B}(I_{Y}/I^2_{Y},B) =0$ by \cite{B} (or see
   \cite{her} or \cite{KMMNP}, Prop.\! 6.17) and hence we get $\ker \tau_{X/Y}
   \simeq \ _0\! \Ext^1_{B}(I_{Y}/I^2_{Y},I_{X/Y})$.

   (ii) It is shown in \cite{KMMNP}, Cor.\! 10.11 (for $n-c=1$) and Cor.\!
   10.17 (for $n-c=0$) that $ \ _0\! \Ext ^1_{B}(I_{Y}/I^2_{Y},I_{X/Y}) =0$
   provided $a_{t+1} > a_{t} +a_{t-1}-b_1$. Indeed the proofs of \cite{KMMNP}
   (or \cite{KM}, Cor.\! 5.10 (i)) show $ \ _0\! \Ext ^1_{R}(I_{Y},I_{X/Y})
   =0$ by mainly using the $R$-free minimal resolution of $I_{Y}$ and the
   degree of the minimal generators of $I_{X/Y}$ which we get from \eqref{EN}.
   Hence we can conclude by the injection $ \ _0\! \Ext
   ^1_{B}(I_{Y}/I^2_{Y},I_{X/Y}) \hookrightarrow \ _0\! \Ext
   ^1_{R}(I_{Y},I_{X/Y})$. The vanishing of $ \ _0\! \Ext
   ^1_{B}(I_{Y}/I^2_{Y},I_{X/Y})$ is, however, much more common than given by
   the above argument. Indeed examining many examples by Macaulay 2
   (\cite{Mac}) in the range $a_0 > b_t=b_1$ we almost always got $ \ _0\!
   \Ext ^1_{B}(I_{Y}/I^2_{Y},I_{X/Y})=0$ provided $a_{t+1} > 3+b_{t}-n+c$ and
   $0 \le n-c \le 1$.
\end{remark}
\begin{example}  [determinantal curves in $\PP^{4}$, i.e. with $c =
  3$] \label{excu}  \ \label{va}

   (i) Let $\cB$ be a general $2 \times 3$ matrix of linear entries and let
   $\cA= [\cB,v]$ where the coordinates of the column $v$ are general
   polynomials of the same degree $m$, $m>0$. The vanishing of all $2 \times
   2$ minors defines a smooth curve $X={X_m}$ of degree $3m+1$ and genus
   $3m(m-1)/2$ in $\PP^{4}$. 
   By Macaulay 2 (mainly),
\begin{equation*}  \label{flagvanish}
      _0\! \Ext^1_{B}(I_{Y}/I^2_{Y},I_{{X_m}/Y})=0 \ \ {\rm if
       \ and \ only \ if } \ \ \ m \ne 2.
   \end{equation*} Its dimension is 1 if $m=2$ in which
   case $ \ _0\! \Ext^1_{A}(I_{{X_m}}/I^2_{{X_m}},A)=0$ and $\ _0\! \Ext
   ^1_{B}(I_{{X_m}/Y},A)=0$. Note that we above  
   only  need to use  Macaulay 2 for $m \le 2$ because the
   condition  $a_{t+1} > a_{t} +a_{t-1}-b_1$ of Remark~\ref{va} (ii) is
   equivalent to 
   $m >2$. It follows from Proposition~\ref{Dim1Prop} (i) that $\overline{
     W(\underline{b};\underline{a})}$ is a generically smooth irreducible
   component of $\Hi ^p(\PP^{4})$ of dimension $\lambda_3 + K_3$ for $m \ne 2$,
   and from either (ii) or (iii) that ${X_m}$ is unobstructed and $\codim_{\Hi
     (\PP^{4})}\overline{ W(\underline{b};\underline{a})}= \ _0\!
   \ext^1_{B}(I_{Y}/I^2_{Y},I_{{X_m}/Y})= 1 $ for $m=2$. Hence $\dim_{({X_m})}
   {\Hi (\PP^{4})} = \lambda_3 + K_3+1$ in this case.

   Finally computing $\lambda_3$ and $K_3$ by their definitions, we get
   $\lambda_3 + K_3 = 17+(m+1)(3m+4)/2$ for $m >1$ and $21$ for $m=1$.

   (ii) Let $\cA$ be a general $2 \times 4$ matrix whose columns consist of
   general polynomials of the same degree, $1$, $2$, $3$ and $m$, $m \ge 3$
   respectively. Put $\cA= [\cB,v]$ where the coordinates of the column $v$
   are all of degree $m$. The vanishing of all $2 \times 2$ minors of $\cA$
   defines a smooth curve $X=:{X_m}$ of degree $11m+6$ and genus
   $(11m^2+29m+8)/2$ in $\PP^{4}$. By Remark~\ref{va} we get $\ _0\!
   \Ext^1_{B}(I_{Y}/I^2_{Y},I_{{X_m}/Y})=0$ for $m > 5$, but a Macaulay 2
   computation shows this vanishing also for $3 \le m \le 5$. It follows from
   Proposition~\ref{Dim1Prop} (i) that $\overline{
     W(\underline{b};\underline{a})}$ is a generically smooth irreducible
   component of $\Hi ^p(\PP^{4})$ of dimension $\lambda_3 + K_3 =$ $$
   85+m(11m-5)/2 \ \ {\rm \ for} \ m >3 \ ,  \ \ {\rm \ and} \  \ \ 126 \ {\rm \
     for} \ m=3.$$

   We can also analyze the cases $m=1$ and $2$ by using
   Proposition~\ref{Dim1Prop} (i). Note that we now delete the column of
   degree 3 polynomials to define $\cB$, i.e. we let $Y$ be defined by the
   maximal minors of the $2 \times 3$ matrix $\cB$ consisting of linear (resp.
   degree $m$) entries in the first and second (resp. third) column. If $m=1$
   (resp. $m=2$) one verifies that $ _0\!
   \Ext^1_{B}(I_{Y}/I^2_{Y},I_{{X_m}/Y})=0$ by Macaulay 2 and we get that
   $\overline{ W(\underline{b};\underline{a})}$ is a generically smooth
   irreducible component of $\Hi ^H(\PP^{3})$ of dimension $66$ (resp. $96$).
\end{example}
\begin{remark}
  We have checked the vanishing of $ \ _0\!
  \Ext^1_{A}(I_{{X_m}}/I^2_{{X_m}},A)$ for several $m \ge 1$ in
  Example~\ref{excu} (ii). It seems that this group is always non-zero for
  every $m \ge 1$. This, we think, shows that the results presented here are
  quite strong because it is hard to show unobstructedness and to find
  $\dim_{(X_m)}{\Hi ^H(\PP^{n})}$ when even the ``smallest known obstruction
  group, $ \ _0\! \Ext^1_{A}(I_{{X_m}}/I^2_{{X_m}},A),$'' does not vanish.
\end{remark}
Now we consider zero dimensional determinantal schemes ($n-c=0$). Indeed
Proposition~\ref{mainTechn} with $\depth_{I(Z)}B= 2 $ and
Proposition~\ref{varMainTechn} are designed to take care of this case. We
restrict our attention to a general $X$ which through Remark~\ref{dep} imply
that all depth conditions of the propositions are satisfied. Then our result
leads e.g. to the unobstructedness of $A$ where $X=\Proj(A)$. In fact for
special choices of $X$, $A$ may be obstructed \cite{siq}. First we consider
codimension $c=3$ determinantal subschemes and schemes with $c \ge 4$ which we
may treat similarly.
 \begin{theorem} \label{Dim0Thm} Let $X=\Proj(A)$, $A=R/I_t(\cA)$, be general
   in $ W(\underline{b};\underline{a})$ and let $Y=\Proj(B)$ and $
   V=\Proj(C)$  be defined by the vanishing of the
   maximal minors of ${\cB}$ and ${\cC}$ respectively where ${\cB}$ (resp.
   ${\cC}$) is obtained by deleting the last column of ${\cA}$ (resp.
   ${\cB}$). Suppose $\dim X =n-c=0$, $a_{i-3}\ge b_{i}$ for $\min (3,t)\le i
   \le t$ and suppose that \eqref{maineq} holds. Moreover suppose $$ {\rm \
     either \ \ } c=3 {\rm \ \ or \ \ [ } \ 4 \le c \le 6 \ \ {\rm and \ }
   \ker \tau_{Y/V} = 0 {\rm \ ] } , $$ and suppose $char k = 0$ if $c=6$. Then
   the following statements are true:

   \vskip 2mm {\rm (i)} If both $\rho^1: \, _0\!\Ext^1_B(I_{X/Y},I_{X/Y})\to \,
   _0\!\Ext^1_B(I_{X/Y},B)$ and $\tau_{X/Y}:\, _0\!
   \Ext^1_{B}(I_{Y}/I^2_{Y},I_{X/Y}) \rightarrow \,_0\! \Ext
   ^1_{B}(I_{Y}/I^2_{Y},B)$ are injective, then $A$ is unobstructed and
   $\overline{ W(\underline{b};\underline{a})}$ is a generically smooth
   irreducible component of the postulation Hilbert scheme $\Hi ^H(\PP^{c})$
   of dimension $\lambda_c + K_3+...+K_c$. Moreover every deformation of $X$
   comes from deforming ${\cA}$.

{\rm (ii)} If $\ _0\! \Ext ^1_{B}(I_{X/Y},A)=0$ and \ $ \ker \tau_{X/Y}= 0$,
then $\overline{ W(\underline{b};\underline{a})}$ belongs to a unique
generically smooth irreducible component $Q$ of $\Hi ^H(\PP^{c})$ of
codimension $\dim \ker \rho^1$. Indeed $A$ is unobstructed and $$\dim Q
=\lambda_c + K_3+...+K_c + \dim \ker \rho^1 . $$

{\rm (iii)} If \ $_0\! \Ext^1_{A}(I_{X}/I^2_{X},A)=0$, then $A$ is
unobstructed, $\dim W(\underline{b};\underline{a})= \lambda_c + K_3+...+K_c$
and $$\codim_{\Hi ^H(\PP^{c})}\overline{ W(\underline{b};\underline{a})}=\dim
\ker \rho^1 + \dim \ker \tau_{X/Y}- \ _0\! \ext ^1_{B}(I_{X/Y},A) .$$

{\rm (iv)} We always have \ $\codim_{\Hi ^H(\PP^{c})}\overline{
  W(\underline{b};\underline{a})} \le \dim \ker \rho^1 +  \dim \ker
\tau_{X/Y}$. \\ 
Suppose $\ _0\! \Ext ^1_{B}(I_{X/Y},A)=0$. Then we have $$\codim_{\Hi
  ^H(\PP^{c})}\overline{ W(\underline{b};\underline{a})} = \dim \ker \rho^1+
\dim \ker \tau_{X/Y} \ $$ if and only if $A$ is unobstructed.
 \end{theorem}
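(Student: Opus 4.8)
The plan is to run the inductive flag machinery of Proposition~\ref{mainTechn} and its variant Proposition~\ref{varMainTechn} along the final step $X\subset Y$ of the flag \eqref{flag}, exactly as in the curve case (Proposition~\ref{Dim1Prop}), the new feature being that $\dim X=0$ forces $\dim Y=1$ and hence $\depth_{\fm}B=2$. So for a general $X$ we have $\depth_{I(Z)}B=2$ \emph{exactly}, and Remark~\ref{dep} (applied with $\alpha=3$, i.e.\ using $a_{i-3}\ge b_i$) ensures that $U=Y-Z$ is a local complete intersection and that $\tilde M_{\cB}$ and $\cI_{X/Y}$ are locally free on the relevant loci. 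This places us in the ``$\depth_{I(Z)}B=2$'' regime for which the two propositions were designed, and it is what makes the term $\dim\ker\rho^1$ intervene.

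The essential preliminary step is to feed the curve $Y$ into the induction. For $c=3$ the curve $Y=\Proj(B)$ has codimension $2$, so Lemma~\ref{HilbBu} together with Theorem~\ref{MainThm2} shows that $Y$ is unobstructed, that every deformation of $Y$ comes from deforming $\cB$, and that $\dim W(\underline b;\underline{a'})=\lambda_{c-1}$. For $4\le c\le 6$ the curve $Y$ has codimension $c-1$ with $3\le c-1\le 5$, so Proposition~\ref{Dim1Prop}(i) applies \emph{precisely} because $\ker\tau_{Y/V}=0$; it yields the same three conclusions with $\dim W(\underline b;\underline{a'})=\lambda_{c-1}+K_3+\dots+K_{c-1}$. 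Feeding this value and the assumption \eqref{maineq} into Proposition~\ref{main1} then gives $\dim W(\underline b;\underline a)=\lambda_c+K_3+\dots+K_c$, the common dimension statement underlying (i)--(iv).

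With the curve case in hand, (i) and (ii) become direct applications. For (i) I would check the three hypotheses of Proposition~\ref{mainTechn}: depth $2$ plus injective $\rho^1$ give (1), injective $\tau_{X/Y}$ gives (2), and the previous paragraph gives (3); hence every deformation of $X$ comes from deforming $\cA$, so $A$ is unobstructed by Lemma~\ref{unobst}. As $\rho^1$ is injective, $\ker\rho^1=0$ and the dimension formula of Proposition~\ref{mainTechn} coincides with $\dim W(\underline b;\underline a)$, so $\overline W$ is a generically smooth component. For (ii) I would instead invoke Proposition~\ref{varMainTechn}: the hypothesis $\ _0\!\Ext^1_B(I_{X/Y},A)=0$ is its condition (1), $\ker\tau_{X/Y}=0$ is (2), and unobstructedness of $Y$ is the weaker (3). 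This gives $A$ unobstructed together with the dimension formula now carrying the extra summand $\dim\ker\rho^1$, so $(X)$ lies on a unique generically smooth component $Q$ of $\Hi^H(\PP^c)$ of codimension $\dim\ker\rho^1$.

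Parts (iii) and (iv) both rest on one dimension identity, obtained by enlarging \eqref{51bis} as in Remark~\ref{diamnoninj}, but feeding in the depth-$2$ value of $\ _0\!\hom(I_{X/Y},A)$ that \eqref{roseq} now produces, namely $\ _0\!\hom(I_{X/Y},A)=\dim M_{\cB}(a_{t+c-2})_0-1+\dim\ker\rho^1$. Combining this with $h^0(\cN_Y)=\dim W(\underline b;\underline{a'})$ (from unobstructedness of $Y$) and the Proposition~\ref{main1} identity $\dim M_{\cB}(a_{t+c-2})_0-1-\ _0\!\hom(I_Y,I_{X/Y})=\dim W(\underline b;\underline a)-\dim W(\underline b;\underline{a'})$ collapses the count to
$$ \ _0\!\hom(I_X,A)-\dim W(\underline b;\underline a)=\dim\ker\rho^1+\dim\ker\tau_{X/Y}-\dim\im\delta. $$
For (iii) the vanishing $\ _0\!\Ext^1_A(I_X/I_X^2,A)=0$ makes $A$ unobstructed and identifies $\im\delta\simeq\ _0\!\Ext^1_B(I_{X/Y},A)$, which yields the stated codimension formula. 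For (iv) the inequality is immediate from $\dim_{(X)}\Hi^H(\PP^c)\le\ _0\!\hom(I_X,A)$ and $\dim\im\delta\ge0$; and when $\ _0\!\Ext^1_B(I_{X/Y},A)=0$ we have $\im\delta=0$, so equality of the codimension with $\dim\ker\rho^1+\dim\ker\tau_{X/Y}$ is equivalent to $\dim_{(X)}\Hi^H(\PP^c)=\ _0\!\hom(I_X,A)$, i.e.\ to unobstructedness of $A$. The step I expect to be most delicate is the consistent bookkeeping of the extra $\dim\ker\rho^1$ term throughout the enlarged diagram, since the weaker depth forces Proposition~\ref{varMainTechn} in place of Proposition~\ref{mainTechn} and one must check that this term enters \eqref{roseq}, the formula of (ii), and the identity above in a compatible way.
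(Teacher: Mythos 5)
Your proposal is correct and follows essentially the same route as the paper: the preliminary step (Theorem~\ref{MainThm2} for $c=3$, Proposition~\ref{Dim1Prop}(i) via $\ker\tau_{Y/V}=0$ for $4\le c\le 6$, then Proposition~\ref{main1}), Proposition~\ref{mainTechn} with Lemma~\ref{unobst} for (i), Proposition~\ref{varMainTechn} for (ii), and the enlarged diagram of Remark~\ref{diamnoninj} together with \eqref{roseq} and the final formula of Proposition~\ref{main1} for (iii) and (iv). Your bookkeeping of the extra $\dim\ker\rho^1$ term matches the paper's computation exactly.
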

 \begin{proof} In all cases we use Proposition~\ref{main1} to get $\dim {
     W(\underline{b};\underline{a})}= \lambda_c + K_3+...+K_c$ since the
   Conjectures hold for $W(\underline{b};\underline{a'}) \ni (Y)$ where
   $\underline{a'} = a_0, a_1,..., a_{t+c-3}$ by Theorem~\ref{main}. Moreover
   if we apply Proposition~\ref{Dim1Prop} (i) to $Y \subset V \subset
   \PP^{n}$, $(Y) \in W(\underline{b};\underline{a'})$ (provided $c > 3$, if
   $c=3$ we apply Theorem~\ref{MainThm2} to $W(\underline{b};\underline{a'})
   \ni (Y)$ ), it follows that {\it every deformation of\, $Y$ comes from
     deforming ${\cB}$}.
  
   (i) Using the above statements we easily conclude by
   Proposition~\ref{mainTechn} and Lemma~\ref{unobst}.

   (ii) Now we use Proposition~\ref{varMainTechn} instead of
   Proposition~\ref{mainTechn}. Comparing the dimension formula of Proposition
   ~\ref{varMainTechn} with the final one of Proposition~\ref{main1} and using
   that $\dim_{(Y)} {\Hi ^p(\PP^{c})}= \dim W(\underline{b};\underline{a'})$
   by Lemma~\ref{unobst}, we get all conclusions of (ii).

   (iii) The vanishing of the $\Ext^1$-group implies that $A$ is unobstructed
   and that $\im \delta \simeq \ _0\! \Ext ^1_{B}(I_{X/Y},A)$, cf.\!
   \eqref{51bis}. We have $\ _0\!\hom (I_{X/Y},A)=\dim M_{\cB}(a_{t+c-2})_0-1
   + \dim \ker \rho^1$ by \eqref{roseq} and $h^0(\cN_Y)- \dim
   W(\underline{b};\underline{a'})=0$ by Lemma~\ref{unobst}. We conclude by
   Remark~\ref{diamnoninj} and the final dimension formula of
   Proposition~\ref{main1}. 

   (iv) The proof is similar to the last part of (iii), cf.\! the proof of
   Proposition~\ref{Dim1Prop} (iv).
 \end{proof}

 \begin{remark} (i) Looking to the proofs we see that we don't need to suppose
   \eqref{maineq} to get the conclusions of (i) and (ii)
   which don't involve dimension and codimension formulas. 

   (ii) Note the overlap in (ii) and (iv) of the theorem.
\end{remark}
In \cite{KMMNP}, Ex.\! 10.18 we considered the example $\cA= [\cB,v]$ where
$\cB$ was a general $2 \times 3$ matrix of linear entries and where the
coordinates of the column $v$ are general polynomials of the same degree $m$,
$m>2$. Using Macaulay 2 one may easily check that $\ _0\! \Ext
^1_{B}(I_{{X}/Y},I_{{X}/Y})=0$ for $m=3$. Since $I_{{X}/Y}=K_B^{*}(-m-1)$ by
\eqref{Di} and \eqref{ell}, it follows that $\ _0\! \Ext
^1_{B}(I_{{X}/Y},I_{{X}/Y} )$ is independent of $m$, and hence vanishes for
every $m \ge 3$. The families of zero dimensional schemes given in
\cite{KMMNP}, Ex.\! 10.18 are therefore generically smooth of known dimension
by Theorem~\ref{Dim0Thm} (i). More advanced examples are given in the examples
below where several aspects of Theorem~\ref{Dim0Thm} are used. Note that the
condition \eqref{maineq} in Theorem~\ref{Dim0Thm} is taken care of by
Remark~\ref{dim0new}.

\begin{example} [Using Theorem~\ref{Dim0Thm} (i), mainly, for zero
  dimensional schemes in $\PP^{3}$] \  \label{thmi}

  Let $\cA= [\cB,v]$ be a general $2 \times 4$ matrix with linear (resp.
  cubic) entries in the first and second (resp. third) column and where the
  entries of $v$ are polynomials of the same degree $m$, $m \ge 3$. The
  vanishing of all $2 \times 2$ minors defines a reduced scheme $X=:{X_m}$ of
  $7m+3$ points in $\PP^{3}$. One verifies that $ _v\!
  \Ext^1_{B}(I_{Y}/I^2_{Y},I_{{X_m}/Y})=0$, $v \le 0$, and $\ _0\! \Ext
  ^1_{B}(I_{{X_m}/Y},I_{{X_m}/Y} )=0$ for $m=3$ by Macaulay 2 mainly and since
  $I_{{X_m}/Y}=K_B^{*}(-m+1)$ by \eqref{Di} and \eqref{ell}, we get the same
  conclusion for every $m \ge 3$. It follows from Theorem~\ref{Dim0Thm} (i)
  that $\overline{ W(\underline{b};\underline{a})}$ is a generically smooth
  irreducible component of $\Hi ^H(\PP^{3})$ of dimension $\lambda_3 +
  K_3=7m+25$ (resp. 45) for $m > 3$ (resp. $m = 3$).

  If $m=1$ or $2$ we delete the column of degree 3 polynomials to define
  $\cB$, i.e. we let $Y$ be defined by the maximal minors of the $2 \times 3$
  matrix $\cB$ consisting of linear (resp. degree $m$) entries in the first
  and second (resp. third) column. If $m=1$ one verifies (by Macaulay 2) that
  $ _0\! \Ext^1_{B}(I_{Y}/I^2_{Y},I_{{X_m}/Y})=\ _0\! \Ext
  ^1_{B}(I_{{X_m}/Y},I_{{X_m}/Y} )=0$ and we get by Theorem~\ref{Dim0Thm} (i)
  that $\overline{ W(\underline{b};\underline{a})}$ is a generically smooth
  irreducible component of $\Hi ^H(\PP^{3})$ of dimension $\lambda_3 +
  K_3=22$. If $m=2$ one verifies that $ _0\!
  \Ext^1_{B}(I_{Y}/I^2_{Y},I_{{X_m}/Y})= 0$ and that $\ _0\! \ext
  ^1_{B}(I_{{X_m}/Y},I_{{X_m}/Y} )=2$. Hence we can not use
  Theorem~\ref{Dim0Thm} (i), but we can use Theorem~\ref{Dim0Thm} (ii)! Such
  cases are more thoroughly explained in the next example. We 
  verify that $ _0\! \Ext^1_{B}(I_{{X_m}/Y},B)=0$, to get $\dim \ker \rho^1 =
  \ _0\! \ext^1_{B}(I_{{X_m}/Y},I_{{X_m}/Y})$, and that $ _0\!
  \Ext^1_{B}(I_{{X_m}/Y},A)=0$. 
  We conclude that $\overline{ W(\underline{b};\underline{a})}$ is contained
  in a generically smooth irreducible component of $\Hi ^H(\PP^{3})$ of
  dimension $\lambda_3 + K_3 +\ _0\! \ext ^1_{B}(I_{{X_m}/Y},I_{{X_m}/Y} )=
  37$.
\end{example}
  
\begin{example} [Using Theorem~\ref{Dim0Thm} (ii), mainly, for
  zero dimensional schemes in $\PP^{3}$] \  \label{thmii}

  Similar to Example~\ref{excu} (ii) we let $\cA = [\cB,v]$ be a general $2
  \times 4$ matrix whose columns consist of general polynomials of the same
  degree, $1$, $2$, $3$ and $m$, $m \ge 3$ respectively. The vanishing of all
  $2 \times 2$ minors of $\cA$ defines a reduced scheme $X=:{X_m}$ of $11m+6$
  points in $\PP^{3}$. This time Macaulay 2 computations show $ _0\!
  \ext^1_{B}(I_{{X_m}/Y},I_{{X_m}/Y})=2$ and $ _v\!
  \Ext^1_{B}(I_{{X_m}/Y},B)=0$ for every $m \ge 3$ and every $v \ge 0$ (we
  only need to check it for $m=3$ because $I_{{X_m}/Y}=K_B^{*}(-m+2)$ by
  \eqref{Di} and \eqref{ell}). It follows that $$\dim \ker \rho^1 = \ _0\!
  \ext^1_{B}(I_{{X_m}/Y},I_{{X_m}/Y})=2$$ for every $m \ge 3$, i.e. we can not
  use Theorem~\ref{Dim0Thm} (i) at all. We have, however, $\ _0\!
  \Ext^1_{B}(I_{Y}/I^2_{Y},I_{{X_m}/Y})=0$ for $m > 5$ by Remark~\ref{va} (ii)
  and $\ _0\! \ext^1_{B}(I_{Y}/I^2_{Y},I_{{X_m}/Y})=2$ (resp. $0$) for $m=3$
  (resp. $3 < m \le 5$) by Macaulay 2. Since $ _0\!
  \Ext^2_{B}(I_{{X_m}/Y},I_{{X_m}/Y})=0$ for $m=3$ and hence for every $m \ge
  3$, we get $ _0\! \Ext^1_{B}(I_{{X_m}/Y},A)=0$ for $m \ge 3$. We can
  therefore apply Theorem~\ref{Dim0Thm} (ii) in this situation except when
  $m=3$. In the latter case
   $ \ _0\! \Ext^1_{A}(I_{{X_m}}/I^2_{{X_m}},A)=0$ and Theorem~\ref{Dim0Thm}
   (iii) applies. Hence Theorem~\ref{Dim0Thm} applies for every $m \ge 3$, and
   we get that $\overline{ W(\underline{b};\underline{a})}$ belongs to a
   unique generically smooth irreducible component of $\Hi ^H(\PP^{3})$ of
   codimension $2$ (resp. $4$) for $m>3$ (resp. $ m = 3$). Indeed $A$ is
   unobstructed and $$\dim \overline{ W(\underline{b};\underline{a})}
   =\lambda_3 + K_3= 11m+35 \ \ {\rm \ for} \ m >3 \ \ \ {\rm \ and} \ \ \ 67
   \ {\rm \ for} \ m=3.$$ We remark that we have checked a possible vanishing
   of $ \ _0\! \Ext^1_{A}(I_{{X_m}}/I^2_{{X_m}},A)$ for several $m \ge 3$, and
   in the range $3 < m \le 6$ this group is non-zero. 

   Finally to be complete we consider the cases $m=1$ and $m=2$ in which case
   we will delete the column of degree $3$ polynomials to define $\cB$ and
   hence $Y$. If $m=1$ we get by Macaulay 2 $\dim \ker \rho^1 = \ _0\!
   \ext^1_{B}(I_{{X_m}/Y},I_{{X_m}/Y})=2$, $\ _0\!
   \Ext^1_{B}(I_{Y}/I^2_{Y},I_{{X_m}/Y})=0$ and $ _0\!
   \Ext^1_{B}(I_{{X_m}/Y},A)=0$. We have $\dim \overline{
     W(\underline{b};\underline{a})} =\lambda_3 + K_3= 35 \ $
   and $$\codim_{\Hi ^H(\PP^{3})}\overline{
     W(\underline{b};\underline{a})}=\dim \ker \rho^1 = 2 $$ by
   Theorem~\ref{Dim0Thm} (ii). Moreover if $m=2$ we get $\dim \ker \rho^1 = \
   _0\! \ext^1_{B}(I_{{X_m}/Y},I_{{X_m}/Y})=4$, $\ _0\!
   \ext^1_{B}(I_{Y}/I^2_{Y},I_{{X_m}/Y})=1$, $ _0\!
   \Ext^1_{B}(I_{{X_m}/Y},A)=0$ and $ \ _0\!
   \Ext^1_{A}(I_{{X_m}}/I^2_{{X_m}},A)=0$ by Macaulay 2. By
   Theorem~\ref{Dim0Thm} (iii) we find $\dim  {
     W(\underline{b};\underline{a})} =\lambda_3 + K_3= 53$ \ and $$\codim_{\Hi
     ^H(\PP^{3})}\overline{ W(\underline{b};\underline{a})}=\dim \ker \rho^1 +
   \dim \ker \tau_{X/Y} = 4+1=5 .$$
 \end{example}

 \begin{example} [Using Theorem~\ref{Dim0Thm} (i) with non-vanishing
   obstruction groups] \

   We let $\cA = [\cB,v]$ be a general $2 \times 4$ matrix whose columns
   consist of general polynomials of the same degree, $2$, $2$, $4$ and $m$,
   $m \ge 4$ respectively. The vanishing of all $2 \times 2$ minors of $\cA$
   defines a reduced scheme $X=:{X_m}$ of $20m+16$ points in $\PP^{3}$. This
   time Macaulay 2 computations show $ _0\!
   \ext^1_{B}(I_{{X_m}/Y},I_{{X_m}/Y})=0$ and $\ _0\!
   \Ext^1_{B}(I_{Y}/I^2_{Y},I_{{X_m}/Y})=0$ (resp. $=1$) for every $m > 4$
   (resp. $m=4$). It follows from Theorem~\ref{Dim0Thm} (i) that $\overline{
     W(\underline{b};\underline{a})}$ is a generically smooth irreducible
   component of $\Hi ^H(\PP^{3})$ of dimension $\lambda_3 + K_3=20m+49$ for $m
   > 4$. For $m = 4$ we have verified that $ \ _0\!
   \ext^1_{A}(I_{{X_m}}/I^2_{{X_m}},A)=3$ and in this particular case we have
   not been able to verify whether $A$ is unobstructed or not. But for every
   $m > 4$, $A$ is unobstructed by Theorem~\ref{Dim0Thm} (i)! Moreover we have
   checked a possible vanishing of $ \ _0\!
   \Ext^1_{A}(I_{{X_m}}/I^2_{{X_m}},A)$ for many $m \ge 3,$ and combined with
   some theoretical arguments (which we don't take here) we can conclude that
   this group is always non-zero for every $m \ge 4$. Again, we think, this
   shows that the results presented here are quite strong because it is really
   hard to show unobstructedness 
   when even the ``smallest known obstruction group, $ \ _0\!
   \Ext^1_{A}(I_{{X_m}}/I^2_{{X_m}},A),$'' does not vanish.
 \end{example}

 In the final case $ 4 \le c \le 6$ and $\ker \tau_{Y/V} \ne 0$ where a
 general $X=\Proj(A) \subset Y=\Proj(B) \subset V=\Proj(C)$ is given by
 deleting columns as above we can not apply Proposition~\ref{mainTechn} to $X
 \subset Y$ because there is no reason to expect condition (3) of
 Proposition~\ref{mainTechn} to be true (that condition is closely related to
 $\ker \tau_{Y/V} = 0$). But we can still use Proposition~\ref{varMainTechn}
 since condition (3) of Proposition~\ref{varMainTechn} is weakened to "$Y$
 unobstructed''. 
The natural condition for "$Y$ unobstructed'' which also give a formula for
$h^0(\cN_Y)- \dim W(\underline{b};\underline{a'} )$ is $_0\!
\Ext^1_{B}(I_{Y}/I^2_{Y},B)=0$, cf. the proof of Proposition~\ref{Dim1Prop}
(ii). We get

 \begin{proposition} \label{Dim0Prop} With notations as above, suppose $ 4 \le
   c \le 6$ (let $char k = 0$ if $c=6$), $\dim X =n-c=0$, $a_{i-3}\ge b_{i}$
   for $\min (3,t)\le i \le t$ and suppose that \eqref{maineq} holds. Then $\dim
   { W(\underline{b};\underline{a})}= \lambda_c + K_3+...+K_c$ and the
   following statements are true:

   {\rm (i)} If $\ _0\! \Ext ^1_{B}(I_{X/Y},A)=0$, $_0\!
   \Ext^1_{B}(I_{Y}/I^2_{Y},I_{X/Y})=0$ and \ $_0\!
   \Ext^1_{B}(I_{Y}/I^2_{Y},B)=0$ then $A$ is unobstructed. Moreover $
   W(\underline{b};\underline{a})$ is contained in a unique generically smooth
   irreducible component of $\Hi ^H(\PP^{c})$ of codimension $\dim \ker \rho^1
   + \dim \ker \tau_{Y/V}- \ _0\! \ext ^1_{C}(I_{Y/V},B)$.


   {\rm (ii)} We always have $\codim_{\Hi ^H(\PP^{c})}\overline{
     W(\underline{b};\underline{a})} \le \dim \ker \rho^1 + \dim \ker
   \tau_{X/Y}+\dim \ker \tau_{Y/V}$.  \\
   Suppose $\ _0\! \Ext ^1_{B}(I_{X/Y},A)=0$ and $\ _0\! \Ext
   ^1_{C}(I_{Y/V},B)=0$. Then we have $$\codim_{\Hi ^H(\PP^{c})}\overline{
     W(\underline{b};\underline{a})} = \dim \ker \rho^1+ \dim \ker
   \tau_{X/Y}+\dim \ker \tau_{Y/V}$$ if and only if $A$ is unobstructed (e.g.
   $_0\! \Ext^1_{A}(I_{X}/I^2_{X},A)=0$).
 \end{proposition}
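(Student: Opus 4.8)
The plan is to isolate the single column‑deletion step $X\subset Y$ on top of the curve $Y=\Proj(B)$, which has dimension $n-(c-1)=1$ since $\dim X=0$ and $n=c$, and to glue the two using Proposition~\ref{varMainTechn} in place of Proposition~\ref{mainTechn} (its condition (3) is only ``$Y$ unobstructed'', whereas $\ker\tau_{Y/V}\ne 0$ blocks the stronger condition (3) of Proposition~\ref{mainTechn}). First I would pin down the dimension. Since $(Y)\in W(\underline{b};\underline{a'})$ with $\underline{a'}=a_0,\dots,a_{t+c-3}$ parametrizes a codimension $c-1$ curve with $2\le c-1\le 5$ and $n-(c-1)=1>0$ (characteristic $0$ when $c=6$), Theorem~\ref{main} gives $\dim W(\underline{b};\underline{a'})=\lambda_{c-1}+K_3+\dots+K_{c-1}$. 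All depth hypotheses hold for general $X$ by Remark~\ref{dep} (the bound $a_{i-3}\ge b_i$ furnishes $\alpha=3$), so with \eqref{maineq} assumed, Proposition~\ref{main1} upgrades this to $\dim W(\underline{b};\underline{a})=\lambda_c+K_3+\dots+K_c$ and supplies its final identity
\[
\dim M_{\cB}(a_{t+c-2})_0-1-\ _0\!\hom_R(I_{Y},I_{X/Y})=\dim W(\underline{b};\underline{a})-\dim W(\underline{b};\underline{a'}).
\]

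For (i) I would first run Proposition~\ref{Dim1Prop}(ii) on the curve $Y\subset V$: its hypothesis is exactly $_0\!\Ext^1_B(I_{Y}/I^2_{Y},B)=0$, so $Y$ is unobstructed and $\dim_{(Y)}\Hi^{p_Y}(\PP^{c})=\dim W(\underline{b};\underline{a'})+\dim\ker\tau_{Y/V}-\ _0\!\ext^1_{C}(I_{Y/V},B)$. The two remaining hypotheses of (i) are precisely conditions (1) and (2) of Proposition~\ref{varMainTechn} for the step $X\subset Y$, with $\tau_{X/Y}$ injective because its source $_0\!\Ext^1_B(I_{Y}/I^2_{Y},I_{X/Y})$ vanishes; condition (3) ``$Y$ unobstructed'' is now in hand. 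Proposition~\ref{varMainTechn} then yields that $A$ is unobstructed and computes $\dim_{(X)}\Hi^{H_A}(\PP^{c})$. Substituting the value of $\dim_{(Y)}\Hi^{p_Y}$ and the final identity of Proposition~\ref{main1}, the two occurrences of $\dim W(\underline{b};\underline{a'})$ cancel and I get
\[
\dim_{(X)}\Hi^{H_A}(\PP^{c})=\dim W(\underline{b};\underline{a})+\dim\ker\rho^1+\dim\ker\tau_{Y/V}-\ _0\!\ext^1_{C}(I_{Y/V},B).
\]
Since $A$ is unobstructed this is the dimension of the unique component $Q$ through $(X)$, and subtracting $\dim W(\underline{b};\underline{a})$ gives the asserted codimension.

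For (ii) I would drop the injectivity hypotheses and bound the tangent space. Applying Remark~\ref{diamnoninj} once to $X\subset Y$ and, independently, to the curve $Y\subset V$, and feeding in \eqref{roseq} (whence $_0\!\hom(I_{X/Y},A)=\dim M_{\cB}(a_{t+c-2})_0-1+\dim\ker\rho^1$) together with the final identity of Proposition~\ref{main1}, I expect the two flag diagrams to telescope: the intermediate term $h^0(\cN_Y)=\ _0\!\hom(I_{Y},B)$ created by the upper step is exactly what the lower step controls, since the proof of Proposition~\ref{Dim1Prop}(iii) applied to $Y\subset V$ gives $_0\!\hom(I_{Y},B)-\dim W(\underline{b};\underline{a'})=\dim\ker\tau_{Y/V}-\dim\im\delta_{Y/V}$. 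Cancelling the $\dim W(\underline{b};\underline{a'})$ terms leaves
\[
_0\!\hom(I_{X},A)-\dim W(\underline{b};\underline{a})=\dim\ker\rho^1+\dim\ker\tau_{X/Y}+\dim\ker\tau_{Y/V}-\dim\im\delta_{X/Y}-\dim\im\delta_{Y/V}.
\]
As each $\dim\im\delta\ge 0$ and $\codim_{\Hi^{H}}\overline{W(\underline{b};\underline{a})}\le\ _0\!\hom(I_{X},A)-\dim W(\underline{b};\underline{a})$ (the tangent space bounds the local dimension), the unconditional inequality of (ii) follows. When $_0\!\Ext^1_B(I_{X/Y},A)=0$ and $_0\!\Ext^1_C(I_{Y/V},B)=0$, both $\im\delta$ vanish because each connecting map lands in the corresponding $\Ext^1$ by \eqref{51bis}; the identity then reads $_0\!\hom(I_{X},A)-\dim W(\underline{b};\underline{a})=\dim\ker\rho^1+\dim\ker\tau_{X/Y}+\dim\ker\tau_{Y/V}$, and the codimension meets this bound exactly when $\dim_{(X)}\Hi^{H}=\ _0\!\hom(I_{X},A)$, i.e. when $A$ is unobstructed (as $_0\!\Ext^1_A(I_{X}/I^2_{X},A)=0$ forces).

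The step I expect to be most delicate is the two‑level bookkeeping in (ii): applying Remark~\ref{diamnoninj} and the diagram \eqref{51bis} to the lower pair $Y\subset V$, and checking that the $h^0(\cN_Y)$ term produced at the top is consumed cleanly at the bottom, so that exactly the three kernels and the two $\im\delta$ corrections survive. This relies on $Y\subset V$ being good determinantal with $\depth_{I(Z)}C\ge 3$, so that the lower‑step analogue of $\ker\rho^1$ drops out by Remark~\ref{52NEW} and the relation of Proposition~\ref{Dim1Prop}(iii) carries no such term; the hypothesis $a_{i-3}\ge b_i$ is what provides $\alpha=3$ in Remark~\ref{dep} and hence this depth, exactly as needed for the one‑dimensional results to apply to the curve $Y$.
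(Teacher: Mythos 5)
Your proposal is correct and follows essentially the same route as the paper: the dimension via Theorem~\ref{main} applied to $W(\underline{b};\underline{a'})$ plus Proposition~\ref{main1}; part (i) by feeding Proposition~\ref{Dim1Prop}(ii) for the curve $Y\subset V$ into Proposition~\ref{varMainTechn} and comparing its dimension formula with the final one of Proposition~\ref{main1}; and part (ii) by telescoping Remark~\ref{diamnoninj} and \eqref{roseq} over the two steps $X\subset Y$ and $Y\subset V$, with the lower-level $\ker\rho^1$ absent by the depth $\ge 3$ of $C$. Your write-up just makes explicit the bookkeeping the paper leaves implicit.
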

 \begin{proof} We have $\dim { W(\underline{b};\underline{a})}= \lambda_c +
   K_3+...+K_c$ by Proposition~\ref{main1} since Theorem~\ref{main} applies to
   $W(\underline{b};\underline{a'})$ where $\underline{a'} = a_0,
   a_1,..., a_{t+c-3}$. 

   (i) This follows from Proposition~\ref{varMainTechn} and
   Proposition~\ref{Dim1Prop} (ii) and by comparing the dimension formula of
   Proposition~\ref{varMainTechn} with
   the final one of Proposition~\ref{main1}.

 
   (ii) Combining Remark~\ref{diamnoninj} and the final formula of
   Proposition~\ref{main1} with \eqref{roseq},
   we get $$ \ _0\!\hom (I_{X},A)- \dim W(\underline{b};\underline{a})=
   h^0(\cN_Y)- \dim W(\underline{b};\underline{a'}) + \dim \ker \rho^1+ \dim
   \ker \tau_{X/Y}- \im \delta.$$ By the same argument we have $ h^0(\cN_Y)-
   \dim W(\underline{b};\underline{a'}) \le \dim \ker \tau_{Y/V}$ and
   moreover, if $\ _0\! \Ext ^1_{C}(I_{Y/V},B)=0$, then equality holds. Hence
   we get the inequality of (ii), and furthermore, if the two $\Ext^1$-groups
   of (ii) vanish then the inequality of (ii) is an equality if and only if
   $\dim_{(X)} {\Hi ^H(\PP^{c})}= \ _0\!\hom (I_{X},A)$ and we are done.
 \end{proof}
\begin{example} [determinantal zero dimensional schemes in $\PP^{4}$, i.e.
  with $c = 4$] \

  Let $\cA=[\cB,v]$ be a general $2 \times 5$ matrix with linear (resp.
  quadratic) entries in the first, second and third (resp. fourth) column and
  let both entries of the column $v$ be of degree $m \ge 2$. Keeping the
  notations of Proposition~\ref{Dim0Prop}, we get that the vanishing of all $2
  \times 2$ minors defines a reduced scheme $X$ of $7m+2$ points in $\PP^{4}$.
  One verifies that $\ \dim \ker \rho^1 = \ _0\! \ext
  ^1_{B}(I_{{X}/Y},I_{{X}/Y} )=3$, $\ _0\! \Ext ^1_{B}(I_{X/Y},A)=0$ and that
  \eqref{maineq} holds 
  by Remark~\ref{dim0new}. 
Note that we have $\dim \ker \tau_{Y/V} =
  1$ and $\ _0\! \Ext ^1_{C}(I_{Y/V},B)=0$ from Example~\ref{excu} (i).

  Suppose $m>2$. Then $\ _0\! \Ext^1_{B}(I_{Y}/I^2_{Y},I_{{X}/Y})= \ _0\!
  \Ext^1_{B}(I_{Y}/I^2_{Y},B)=0$ for every $m>2$ and it follows from
  Proposition~\ref{Dim0Prop} (i) that $A$ is unobstructed and $\dim  {
    W(\underline{b};\underline{a})}= \lambda_4 + K_3 + K_4 = 7m+31$. Hence
  $W(\underline{b};\underline{a})$ is contained in a unique generically smooth
  irreducible component of the postulation Hilbert scheme $\Hi ^H(\PP^{4})$
  and,
  $$\codim_{\Hi ^H(\PP^{4})}\overline{ W(\underline{b};\underline{a})} =
  \dim \ker \rho^1+ \dim \ker \tau_{Y/V}=3+1=4.$$

  Suppose $m=2$. Since $ \dim \ker \tau_{X/Y}=\ _0\!
  \ext^1_{B}(I_{Y}/I^2_{Y},I_{{X}/Y})=4$ and $_0\!
  \Ext^1_{A}(I_{X}/I^2_{X},A)=0$, it follows from Proposition~\ref{Dim0Prop}
  (ii) that $A$ is unobstructed and that $\dim {
    W(\underline{b};\underline{a})}= \lambda_4 + K_3 + K_4 = 44$. Hence
  $W(\underline{b};\underline{a})$ is contained in a unique generically smooth
  irreducible component of $\Hi ^H(\PP^{4})$ and,
  $$\codim_{\Hi ^H(\PP^{4})}\overline{ W(\underline{b};\underline{a})} =
  \dim \ker \rho^1+ \dim \ker \tau_{X/Y}+\dim \ker \tau_{Y/V}=3+4+1=8.$$ In
  this case we see that all three kernels of Proposition~\ref{Dim0Prop} (ii)
  contribute to the codimension of $ W(\underline{b};\underline{a})$ in $\Hi
    ^H(\PP^{4})$!

\end{example}
\begin{remark} \label{correctKM} If we apply Proposition~\ref{mainTechn}
  successively to the flag \eqref{flag} we get Prop.\! 10.12 and Thm.\! 10.13
  of \cite{KMMNP} in a correct version (the injectivity of $\rho^1$, i.e. the
  assumption (1) of Proposition~\ref{mainTechn} in the case $\depth_{I(Z)}B =
  2 $ lacked in \cite{KMMNP}). Indeed in \cite{KM}, Rem.\! 6.3 we announced
  that some results in \S 10 of \cite{KMMNP} were inaccurate, and in the new
  hypothesis (*) of Rem.\! 6.3 we increased the depth assumption of the
  corresponding hypothesis in \cite{KMMNP} by 1 to get valid results. The new
  hypothesis (*) applies to determinantal schemes of positive dimension, i.e.
  the results of \cite{KMMNP}, \S 10 hold in this case. In the zero
  dimensional case we introduced, in addition to (*) of Rem.\! 6.3, an
  assumption (Rem.\! 6.3 (ii)), which is equivalent to the injectivity of
  $\rho^1$. This
  assumption 
  makes the results of \cite{KMMNP}, \S 10 correct in the zero dimensional case.
  In \cite{KM}, Rem.\! 6.3 (ii) we indicate a proof for this claim, and now
  Proposition~\ref{mainTechn} provides us with another proof. In \cite{KM},
  Rem.\! 6.3 (i) and (iii), we claimed that e.g. the unobstructedness of $A$
  also implied {\it all} results of \cite{KMMNP}, \S 10, but this is a little
  inaccurate because the very final result of \cite{KMMNP} (Cor.\! 10.17) uses
  the injectivity of $\rho^1$ to get the dimension formula. E.g. in
  Example~\ref{thmii} for $m>3$ (resp. Example~\ref{thmi} with $m=2$) the
  formula of Cor.\! 10.17 gives $\dim_{(X)} {\Hi ^H(\PP^{3})}= \dim  {
    W(\underline{b};\underline{a})}$, which should be correct according to
  Rem.\! 6.3 (i) (resp. Rem.\! 6.3 (iii)). The correct dimension is, however,
  $\dim_{(X)} {\Hi^H(\PP^{3})}= \dim  {
    W(\underline{b};\underline{a})} + \dim \ker \rho^1$, $\dim \ker \rho^1=2$
  in both cases. This observation is a reason for writing this paper, namely
  to provide detailed proofs in the zero dimensional case for the correction
  ``Rem.\! 6.3 (ii)'' 
  and to present several results related to \cite{KM}, Rem.\! 6.3 (i) and
  (iii) (see Proposition~\ref{varMainTechn}, Theorem~\ref{Dim0Thm},
  Proposition~\ref{Dim0Prop} where we see that we have to add $\dim \ker
  \rho^1$ to get valid (co)dimension formulas. Note also the obvious misprint
  in Rem.\! 6.3, that $I_c$ should have been $I_{c-1}$).
  Thus, letting $D_i=R/I_{X_i}$ and $I_i=I_{X_{i+1}/X_{i}}$, the following
  hypothesis makes {\it all} results of \cite{KMMNP}, \S 10, true for good
  determinantal schemes $X$ with $\dim X \ge 0$;

  {\em Given $X\subset \PP^{n}$ a good determinantal scheme of dimension
    $n-c$, we will assume that there exists a flag $X=X_c\subset
    X_{c-1}\subset ...\subset X_2\subset \PP^{n}$ such that for each $i<c$,
    the closed embedding $X_{i+1}\hookrightarrow X_i$ is l.c.i. outside some
    set $Z_i$ of codimension 2 in $X_{i+1}$ ($\depth _{Z_i}\cO _{X_{i+1}}\ge
    2$). Moreover, we suppose $X_2\hookrightarrow \PP^{n}$ is an l.c.i. in
    codimension $\le 1$ and if \ $ c=n$ we suppose that $\
    _0\!\Ext^1_{D_{c-1}}(I_{c-1},I_{c-1}) \hookrightarrow \
    _0\!\Ext^1_{D_{c-1}}(I_{c-1},D_{c-1})$ is injective.}
\end{remark}
  

\end{document}